\newtheorem{theorem}{Theorem}
\theoremstyle{plain}
\newtheorem{definition}{Definition}
\newtheorem{lemma}{Lemma}
\newtheorem{proposition}{Proposition}
\newtheorem{remark}{Remark}
\numberwithin{equation}{section}
\begin{document}
\title[Inequalities for $s$-convex functions and applications]{Integral
inequalities for mappings whose derivatives are $s$-convex in the second
sense and applications to special means for positive real numbers}
\author{Mevl\"{u}t TUN\c{C}$^{\clubsuit }$}
\address{$^{\clubsuit ,\spadesuit }$Mustafa Kemal University, Faculty of
Science and Arts, Department of Mathematics, 31000, Hatay, Turkey}
\email{mevluttttunc@gmail.com}
\urladdr{}
\thanks{}
\author{Sevil BALGE\c{C}T\.{I}$^{\spadesuit }$}
\email{sevilbalgecti@gmail.com }
\urladdr{}
\thanks{$^{\clubsuit }Corresponding$ $Author$}
\subjclass[2000]{Primary 26D15}
\keywords{$s$-convexity, Hermite-Hadamard Inequality, Bullen's inequality,
Special Means}
\dedicatory{}
\thanks{}

\begin{abstract}
In this paper, the authors establish a new type integral inequalities for
differentiable $s$-convex functions in the second sense. By the well-known H%
\"{o}lder inequality and power mean inequality, they obtain some integral
inequalities related to the $s$-convex functions and apply these
inequalities to special means for positive real numbers.
\end{abstract}

\maketitle

\section{Introduction}

\subsection{Definitions}

\begin{definition}
\cite{mit2} A function $\varphi :I\rightarrow 
\mathbb{R}
$ is said to be convex on $I$ if inequality%
\begin{equation}
\varphi \left( tx+\left( 1-t\right) y\right) \leq t\varphi \left( x\right)
+\left( 1-t\right) \varphi \left( y\right)  \label{101}
\end{equation}%
holds for all $x,y\in I$ and $t\in \left[ 0,1\right] $. We say that $\varphi 
$ is concave if $(-\varphi )$ is convex.
\end{definition}

\begin{definition}
\cite{hud}\textit{\ Let }$s\in \left( 0,1\right] .$\textit{\ A function }$%
\varphi :\left( 0,\infty \right] \rightarrow \left[ 0,\infty \right] $%
\textit{\ is said to be }$s$-\textit{convex in the second sense if \ \ \ \ \
\ \ \ \ \ \ \ }%
\begin{equation}
\varphi \left( tx+\left( 1-t\right) y\right) \leq t^{s}\varphi \left(
x\right) +\left( 1-t\right) ^{s}\varphi \left( y\right) ,  \label{105}
\end{equation}%
\textit{for all }$x,y\in \left( 0,b\right] $\textit{\ \ and }$t\in \left[ 0,1%
\right] $\textit{. This class of }$s$\textit{-convex functions is usually
denoted by }$K_{s}^{2}$\textit{.}
\end{definition}

Certainly, $s$-convexity means just ordinary convexity when $s=1$.

\subsection{Theorems}

\begin{theorem}
\textbf{The Hermite-Hadamard inequality:} Let $\varphi :I\subseteq 
\mathbb{R}
\rightarrow 
\mathbb{R}
$ be a convex function and $u,v\in I$ with $u<v$. The following double
inequality:%
\begin{equation}
\varphi \left( \frac{u+v}{2}\right) \leq \frac{1}{v-u}\int_{u}^{v}\varphi
\left( x\right) dx\leq \frac{\varphi \left( u\right) +\varphi \left(
v\right) }{2}  \label{110}
\end{equation}%
is known in the literature as Hadamard's inequality (or Hermite-Hadamard
inequality) for convex functions. If $\varphi $ is a positive concave
function, then the inequality is reversed.
\end{theorem}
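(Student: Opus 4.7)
The plan is to prove the two inequalities in \eqref{110} separately, using only the defining inequality \eqref{101} together with the substitution $x=tu+(1-t)v$, which maps $[0,1]$ bijectively onto $[u,v]$ and satisfies $dx=-(v-u)\,dt$, so that
\begin{equation*}
\frac{1}{v-u}\int_{u}^{v}\varphi(x)\,dx=\int_{0}^{1}\varphi\bigl(tu+(1-t)v\bigr)\,dt.
\end{equation*}
This identity is the bridge between the integral average and the definition of convexity, and both halves of the proof rely on it.

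For the right-hand inequality, I would apply \eqref{101} directly to $\varphi(tu+(1-t)v)$ to get $\varphi(tu+(1-t)v)\le t\varphi(u)+(1-t)\varphi(v)$, then integrate in $t$ over $[0,1]$. The left side becomes the integral average above, and the right side evaluates to $\tfrac{1}{2}\varphi(u)+\tfrac{1}{2}\varphi(v)$, which is exactly what is wanted.

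For the left-hand inequality, the key trick is to write the midpoint as
\begin{equation*}
\frac{u+v}{2}=\frac{1}{2}\bigl(tu+(1-t)v\bigr)+\frac{1}{2}\bigl((1-t)u+tv\bigr),
\end{equation*}
and apply \eqref{101} with weight $t=\tfrac{1}{2}$ to obtain
\begin{equation*}
\varphi\!\left(\frac{u+v}{2}\right)\le \tfrac{1}{2}\varphi\bigl(tu+(1-t)v\bigr)+\tfrac{1}{2}\varphi\bigl((1-t)u+tv\bigr).
\end{equation*}
Integrating in $t$ over $[0,1]$ and using the symmetry $t\mapsto 1-t$, both integrals on the right collapse to the same integral average, giving the midpoint bound.

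The only subtle step is the symmetric decomposition of $(u+v)/2$ used in the midpoint bound; the rest is bookkeeping. For the concave case, replacing $\varphi$ by $-\varphi$ applies the convex case and reverses both inequalities, which yields the final assertion.
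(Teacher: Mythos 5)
Your proof is correct. Note that the paper itself offers no proof of this theorem: it is quoted in the introduction as a classical result, so there is nothing to compare against. Your argument is the standard one --- parametrize $[u,v]$ by $x=tu+(1-t)v$ so that $\frac{1}{v-u}\int_{u}^{v}\varphi(x)\,dx=\int_{0}^{1}\varphi(tu+(1-t)v)\,dt$, integrate the defining inequality \eqref{101} to get the right-hand bound, and use the symmetric splitting $\frac{u+v}{2}=\frac{1}{2}(tu+(1-t)v)+\frac{1}{2}((1-t)u+tv)$ together with the change of variables $t\mapsto 1-t$ to get the left-hand bound. The reduction of the concave case to the convex case via $-\varphi$ is also fine (the hypothesis of positivity in the statement is not actually needed for that step). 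The only point one could quibble about is that you implicitly use the integrability of $\varphi$ on $[u,v]$, which follows from the fact that a convex function on $I$ is continuous on the interior of $I$ and bounded on $[u,v]$; this is standard and does not constitute a gap at the level of this paper.
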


\begin{theorem}
\cite{ssd6} \textit{Suppose that }$\varphi :\left[ 0,\infty \right)
\rightarrow \left[ 0,\infty \right) $\textit{\ is an }$s-$\textit{convex
function in the second sense, where }$s\in \left( 0,1\right] $\textit{, and
let }$a,b\in \left[ 0,\infty \right) ,$ $a<b.$\textit{\ If }$\varphi \in
L_{1}\left( \left[ 0,1\right] \right) $\textit{, then the following
inequalities hold:}%
\begin{equation}
2^{s-1}\varphi \left( \frac{u+v}{2}\right) \leq \frac{1}{v-u}%
\int_{u}^{v}\varphi \left( x\right) dx\leq \frac{\varphi \left( u\right)
+\varphi \left( v\right) }{s+1}.  \label{109}
\end{equation}%
The constant $k=\frac{1}{s+1}$ is the best possible in the second inequality
in (\ref{109}). The above inequalities are sharp. If $\varphi $ is an $s$%
-concave function in the second sense, then the inequality is reversed.
\end{theorem}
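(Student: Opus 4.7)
The plan is to establish the two halves of (\ref{109}) separately, each by invoking the defining inequality (\ref{105}) of $s$-convexity with a suitable parametrization, integrating the resulting pointwise bound in the auxiliary variable $t\in[0,1]$, and then reverting to the $x$-variable via the affine substitution $x = tu+(1-t)v$, which converts $\int_0^1 \varphi(tu+(1-t)v)\,dt$ into $\tfrac{1}{v-u}\int_u^v \varphi(x)\,dx$.

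For the right-hand inequality, I would write a typical point of $[u,v]$ as $x = tu+(1-t)v$ and apply (\ref{105}) directly, obtaining
\[
\varphi(tu+(1-t)v) \leq t^{s}\varphi(u) + (1-t)^{s}\varphi(v).
\]
Integrating over $t\in[0,1]$ and using $\int_0^1 t^{s}\,dt = \int_0^1 (1-t)^{s}\,dt = \tfrac{1}{s+1}$ yields $\tfrac{1}{v-u}\int_u^v \varphi(x)\,dx \leq \tfrac{\varphi(u)+\varphi(v)}{s+1}$, which is the right-hand estimate.

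For the left-hand inequality, I would exploit the symmetry of $[u,v]$ about its midpoint by pairing the points $x=tu+(1-t)v$ and $y=(1-t)u+tv$, so that $\tfrac{x+y}{2} = \tfrac{u+v}{2}$. Applying (\ref{105}) with equal weights $\tfrac{1}{2},\tfrac{1}{2}$ gives
\[
\varphi\left(\frac{u+v}{2}\right) \leq \frac{1}{2^{s}}\bigl[\varphi(tu+(1-t)v) + \varphi((1-t)u+tv)\bigr].
\]
Integrating in $t$, each of the two integrals on the right collapses (using the affine substitution above, with the symmetry $t\mapsto 1-t$ on the second) to $\tfrac{1}{v-u}\int_u^v \varphi(x)\,dx$; dividing by $2$ then produces the announced prefactor $2^{s-1}$.

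There is no serious analytic obstacle here; the delicate step is choosing the symmetric parametrization in the midpoint bound so that the two integrals really collapse to the same mean integral, and keeping the constants $2^{-s}$ and $\tfrac{1}{s+1}$ in the correct places. For the sharpness assertion I would exhibit the power function $\varphi(x)=x^{s}$, which lies in $K_{s}^{2}$, and verify that it turns the right inequality into an equality on a suitable interval, so that $k=\tfrac{1}{s+1}$ cannot be lowered; the $s$-concave case then follows at once by applying the proved inequalities to $-\varphi$.
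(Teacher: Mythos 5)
The paper itself offers no proof of this theorem: it is quoted from Dragomir and Fitzpatrick \cite{ssd6}, so there is nothing internal to compare your argument against. Judged on its own, your proof is correct and is in fact the standard argument from that reference: the right-hand estimate follows by integrating the defining inequality (\ref{105}) along the parametrization $x=tu+(1-t)v$, using $\int_0^1 t^s\,dt=\int_0^1(1-t)^s\,dt=\frac{1}{s+1}$, and the left-hand one by applying (\ref{105}) with weights $\frac{1}{2},\frac{1}{2}$ to the symmetric pair $tu+(1-t)v$ and $(1-t)u+tv$ and integrating, which gives $\varphi\left(\frac{u+v}{2}\right)\leq 2^{1-s}\cdot\frac{1}{v-u}\int_u^v\varphi(x)\,dx$ as you say. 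Two small points. First, your sharpness example $\varphi(x)=x^s$ on $[0,1]$ does give equality in the second inequality (both sides equal $\frac{1}{s+1}$), so it settles the claim that $k=\frac{1}{s+1}$ is best possible; it does not, however, give equality in the first inequality when $s<1$ (there one gets $\frac{1}{2}<\frac{1}{s+1}$), so the blanket assertion that the inequalities are sharp would need a separate witness for the left-hand constant. Second, reducing the $s$-concave case to the $s$-convex one via $-\varphi$ sits awkwardly with the definition, which requires the function to take nonnegative values; it is cleaner to note that your proof uses only the defining inequality, so reversing it at the single point where it is invoked reverses both conclusions directly.
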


\begin{theorem}
Let $\varphi :I\subseteq 
\mathbb{R}
\rightarrow 
\mathbb{R}
$ be a convex function on the interval $I$ of real numbers and $a,b\in I$
with $a<b$. The inequality%
\begin{equation*}
\frac{1}{v-u}\int_{u}^{v}\varphi \left( x\right) dx\leq \frac{1}{2}\left[
\varphi \left( \frac{u+v}{2}\right) +\frac{\varphi \left( u\right) +\varphi
\left( v\right) }{2}\right]
\end{equation*}%
is known as \textbf{Bullen's inequality} for convex functions \cite[p.39]%
{dr2}.
\end{theorem}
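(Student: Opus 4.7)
The plan is to derive Bullen's inequality directly from the right-hand side of the Hermite-Hadamard inequality (\ref{110}) by splitting the interval $[u,v]$ at its midpoint and applying the inequality on each half separately. This is the standard device and avoids any appeal to higher-order smoothness.

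First I would set $m=(u+v)/2$ and note that $\varphi$, being convex on $I$, is in particular convex on each of the subintervals $[u,m]$ and $[m,v]$. Applying the right half of inequality (\ref{110}) on $[u,m]$ gives
\begin{equation*}
\frac{1}{m-u}\int_{u}^{m}\varphi (x)\,dx \leq \frac{\varphi (u)+\varphi (m)}{2},
\end{equation*}
and applying it on $[m,v]$ gives the analogous bound with $\varphi(v)$ in place of $\varphi(u)$. Multiplying each through by $(m-u)=(v-m)=(v-u)/2$ and adding the two resulting inequalities yields
\begin{equation*}
\int_{u}^{v}\varphi (x)\,dx \leq \frac{v-u}{2}\cdot\frac{\varphi (u)+2\varphi (m)+\varphi (v)}{2}.
\end{equation*}

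Dividing both sides by $v-u$ and regrouping the right-hand side as $\tfrac12[\varphi(m) + (\varphi(u)+\varphi(v))/2]$ produces exactly Bullen's inequality. There is really no hard step here: the only point that requires a sentence of comment is the observation that convexity is preserved under restriction to a subinterval, so that the Hermite-Hadamard inequality is legitimately invoked on $[u,m]$ and $[m,v]$. The concave case, if one wished to record it, follows immediately by applying the argument to $-\varphi$.
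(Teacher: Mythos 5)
Your argument is correct: bisecting $[u,v]$ at the midpoint, applying the right-hand Hermite--Hadamard inequality (\ref{110}) on each half, and averaging gives exactly $\frac{1}{v-u}\int_{u}^{v}\varphi(x)\,dx\leq\frac{1}{4}\left[\varphi(u)+2\varphi\left(\frac{u+v}{2}\right)+\varphi(v)\right]$, which regroups to Bullen's inequality. The paper itself offers no proof of this statement, quoting it as a known result from the literature, so there is nothing to compare against; your bisection derivation is the standard one and is complete.
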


In \cite{dra2}, Dragomir and Agarwal\ obtained inequalities for
differentiable convex mappings which are connected to Hadamard's inequality,
as follow:

\begin{theorem}
Let $f:I\subseteq 
\mathbb{R}
\rightarrow 
\mathbb{R}
$ be a differentiable mapping on $I^{\circ }$, where $a,b\in I$, with $a<b$.
If $\left\vert f^{\prime }\right\vert ^{q}$ is convex on $\left[ a,b\right] $%
, then the following inequality holds: 
\begin{equation}
\left\vert \frac{f\left( a\right) +f\left( b\right) }{2}-\frac{1}{b-a}%
\int_{a}^{b}f\left( x\right) dx\right\vert \leq \frac{b-a}{8}\left[
\left\vert f^{\prime }\left( a\right) \right\vert +\left\vert f^{\prime
}\left( b\right) \right\vert \right] .  \label{11}
\end{equation}
\end{theorem}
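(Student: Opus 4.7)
The plan is to reduce the statement to a single integral identity, apply the triangle inequality, and then use convexity of $|f'|$ together with two elementary integrals. Although the hypothesis is written in terms of $|f'|^q$, the conclusion contains no exponent $q$, so I will treat the hypothesis as convexity of $|f'|$ on $[a,b]$, which is the standard Dragomir--Agarwal setting.

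\textbf{Step 1: the Montgomery-type identity.} I would first establish the representation
\begin{equation*}
\frac{f(a)+f(b)}{2}-\frac{1}{b-a}\int_{a}^{b}f(x)\,dx=\frac{b-a}{2}\int_{0}^{1}(1-2t)\,f^{\prime }(ta+(1-t)b)\,dt.
\end{equation*}
This is obtained by integration by parts on the right-hand side, differentiating $f(ta+(1-t)b)$ with respect to $t$ and using the substitution $x=ta+(1-t)b$ to rewrite the resulting integral of $f$ over $[a,b]$. This identity is the backbone of the argument; setting it up correctly (particularly getting the sign and the factor $\tfrac{b-a}{2}$ right) is the only place where a careless slip could propagate into the final constant.

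\textbf{Step 2: triangle inequality and convexity.} Taking absolute values and bringing them inside the integral gives
\begin{equation*}
\left\vert \frac{f(a)+f(b)}{2}-\frac{1}{b-a}\int_{a}^{b}f(x)\,dx\right\vert \leq \frac{b-a}{2}\int_{0}^{1}|1-2t|\,|f^{\prime }(ta+(1-t)b)|\,dt.
\end{equation*}
Applying the convexity of $|f^{\prime }|$ yields $|f^{\prime }(ta+(1-t)b)|\leq t|f^{\prime }(a)|+(1-t)|f^{\prime }(b)|$, so the right-hand side is bounded by
\begin{equation*}
\frac{b-a}{2}\left[ |f^{\prime }(a)|\int_{0}^{1}t|1-2t|\,dt+|f^{\prime }(b)|\int_{0}^{1}(1-t)|1-2t|\,dt\right] .
\end{equation*}

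\textbf{Step 3: evaluating the weights.} Splitting each integral at $t=1/2$ (where $1-2t$ changes sign) and computing directly, both $\int_{0}^{1}t|1-2t|\,dt$ and $\int_{0}^{1}(1-t)|1-2t|\,dt$ equal $\tfrac{1}{4}$; by the symmetry $t\leftrightarrow 1-t$ of $|1-2t|$ one only needs to compute one of them. Substituting these values gives the bound $\tfrac{b-a}{8}[|f^{\prime }(a)|+|f^{\prime }(b)|]$, which is the desired inequality.

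\textbf{Main obstacle.} None of the steps are deep; the only genuinely delicate point is the integration-by-parts identity in Step 1, where one must be careful with orientation (whether the convex combination $ta+(1-t)b$ runs from $b$ to $a$ as $t$ goes from $0$ to $1$) so that the endpoint contributions combine to exactly $\tfrac{f(a)+f(b)}{2}$ after dividing by $b-a$. Once that identity is in hand, the rest is a routine application of convexity and an elementary integral evaluation.
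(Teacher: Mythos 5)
Your proof is correct, and it is precisely the standard argument of Dragomir and Agarwal: the paper itself only quotes this theorem from the cited reference without giving a proof, so there is nothing in the paper to compare against, but your identity, the triangle-inequality step, and the evaluation $\int_{0}^{1}t\left\vert 1-2t\right\vert dt=\int_{0}^{1}\left( 1-t\right) \left\vert 1-2t\right\vert dt=\tfrac{1}{4}$ all check out and yield the constant $\tfrac{b-a}{8}$. You were also right to read the hypothesis as convexity of $\left\vert f^{\prime }\right\vert$ rather than $\left\vert f^{\prime }\right\vert ^{q}$; the exponent $q$ in the paper's statement is a transcription slip from the subsequent power-mean theorem.
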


In \cite{cem}, Pearce and Pe\v{c}ari\'{c} obtained inequalities for
differentiable convex mappings which are connected with Hadamard's
inequality, as follow:

\begin{theorem}
Let $f:I\subseteq 
\mathbb{R}
\rightarrow 
\mathbb{R}
$ be differentiable mapping on $I^{\circ }$, where $a,b\in I$, with $a<b$.
If $\left\vert f^{\prime }\right\vert ^{q}$ is convex on $\left[ a,b\right] $
for some $q\geq 1$, then the following inequality holds:%
\begin{equation}
\left\vert \frac{f\left( a\right) +f\left( b\right) }{2}-\frac{1}{b-a}%
\int_{a}^{b}f\left( x\right) dx\right\vert \leq \frac{b-a}{4}\left( \frac{%
\left( \left\vert f^{\prime }\left( a\right) \right\vert ^{q}+\left\vert
f^{\prime }\left( b\right) \right\vert ^{q}\right) }{2}\right) ^{\frac{1}{q}%
}.  \label{12}
\end{equation}%
If $\left\vert f^{\prime }\right\vert ^{q}$ is concave on $\left[ a,b\right] 
$ for some $q\geq 1$, then 
\begin{equation}
\left\vert \frac{f\left( a\right) +f\left( b\right) }{2}-\frac{1}{b-a}%
\int_{a}^{b}f\left( x\right) dx\right\vert \leq \frac{b-a}{4}\left\vert
f^{\prime }\left( \frac{a+b}{2}\right) \right\vert .  \label{13}
\end{equation}
\end{theorem}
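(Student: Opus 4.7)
The plan is to build on the standard Dragomir--Agarwal-type identity
\begin{equation*}
\frac{f(a)+f(b)}{2}-\frac{1}{b-a}\int_{a}^{b}f(x)\,dx \;=\; \frac{b-a}{2}\int_{0}^{1}(1-2t)\,f'(ta+(1-t)b)\,dt,
\end{equation*}
which I would verify by integrating by parts on the right-hand side (with $u=1-2t$ and $dv=f'(ta+(1-t)b)\,dt$) and performing the change of variables $x=ta+(1-t)b$. Taking absolute values and pushing them inside the integral reduces both halves of the theorem to estimating the single quantity $\int_{0}^{1}|1-2t|\,|f'(ta+(1-t)b)|\,dt$.

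For the convex case, I would apply H\"older's inequality with exponents $q/(q-1)$ and $q$ (equivalently, the power-mean inequality) after the split $|1-2t|=|1-2t|^{1-1/q}|1-2t|^{1/q}$, which yields
\begin{equation*}
\int_{0}^{1}|1-2t|\,|f'(ta+(1-t)b)|\,dt \leq \left(\int_{0}^{1}|1-2t|\,dt\right)^{1-1/q}\!\left(\int_{0}^{1}|1-2t|\,|f'(ta+(1-t)b)|^{q}\,dt\right)^{1/q}.
\end{equation*}
I would then invoke the convexity hypothesis $|f'(ta+(1-t)b)|^{q}\leq t|f'(a)|^{q}+(1-t)|f'(b)|^{q}$ and evaluate the elementary weights $\int_{0}^{1}|1-2t|\,dt=\tfrac{1}{2}$ and $\int_{0}^{1}t|1-2t|\,dt=\int_{0}^{1}(1-t)|1-2t|\,dt=\tfrac{1}{4}$. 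Combined with the prefactor $\tfrac{b-a}{2}$ from the identity, these constants should collapse neatly to the stated bound $\tfrac{b-a}{4}\bigl(\tfrac{|f'(a)|^{q}+|f'(b)|^{q}}{2}\bigr)^{1/q}$.

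For the concave case, I would renormalize the weight by setting $d\mu(t):=2|1-2t|\,dt$, which is a probability measure on $[0,1]$. After applying the same power-mean step as above to lift $|f'|$ to the $q$-th power, Jensen's inequality for the concave function $|f'|^{q}$ gives
\begin{equation*}
\int_{0}^{1}|f'(ta+(1-t)b)|^{q}\,d\mu(t) \leq \Bigl|f'\!\Bigl(\int_{0}^{1}(ta+(1-t)b)\,d\mu(t)\Bigr)\Bigr|^{q},
\end{equation*}
and the symmetry of $|1-2t|$ about $t=\tfrac{1}{2}$ forces the barycenter on the right to equal $\tfrac{a+b}{2}$. Undoing the normalization factor $\tfrac{1}{2}$ and multiplying by $\tfrac{b-a}{2}$ then produces the claimed $\tfrac{b-a}{4}\bigl|f'(\tfrac{a+b}{2})\bigr|$.

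I expect the main subtlety to lie in the concave case: one has to pick the correct (weighted) probability measure for Jensen, since applying Jensen against the uniform measure on $[0,1]$ throws away the information carried by the factor $|1-2t|$ and does not reproduce the sharp constant $\tfrac{1}{4}$. The convex half is essentially routine once the integration-by-parts identity is in hand.
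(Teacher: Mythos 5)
Your argument is correct: the identity, the H\"older/power-mean split with the weight $|1-2t|$, the evaluation of the moments $\int_0^1 t|1-2t|\,dt=\tfrac14$, and the use of Jensen against the normalized measure $2|1-2t|\,dt$ in the concave case all check out and yield exactly the stated constants. Note, however, that the paper offers no proof of this statement to compare against --- it is quoted as a known result of Pearce and Pe\v{c}ari\'{c} \cite{cem}; your outline is essentially the standard argument from that source (and from Dragomir--Agarwal for the underlying identity), including the key point you flag about applying Jensen with the weighted rather than the uniform measure.
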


In \cite{alo2}, Alomari, Darus and K\i rmac\i\ obtained inequalities for
differentiable $s$-convex and concave mappings which are connected with
Hadamard's inequality, as follow:

\begin{theorem}
Let $f:I\subseteq \left[ 0,\infty \right) \rightarrow 
\mathbb{R}
$ be differentiable mapping on $I^{\circ }$ such that $f^{\prime }\in L\left[
a,b\right] ,$\ where $a,b\in I$ with $a<b$. If $\left\vert f^{\prime
}\right\vert ^{q}$, $q\geq 1$ is concave on $\left[ a,b\right] $ for some
fixed $s\in \left( 0,1\right] $, then the following inequality holds:%
\begin{eqnarray}
&&\left\vert \frac{f\left( a\right) +f\left( b\right) }{2}-\frac{1}{b-a}%
\int_{a}^{b}f\left( x\right) dx\right\vert  \label{14} \\
&\leq &\left( \frac{b-a}{4}\right) \left( \frac{q-1}{2q-1}\right) ^{1-\frac{1%
}{q}}\left[ \left\vert f^{\prime }\left( \frac{3a+b}{4}\right) \right\vert
+\left\vert f^{\prime }\left( \frac{a+3b}{4}\right) \right\vert \right] . 
\notag
\end{eqnarray}
\end{theorem}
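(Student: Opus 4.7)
The plan is to combine the Dragomir--Agarwal integration-by-parts identity
\begin{equation*}
\frac{f(a)+f(b)}{2}-\frac{1}{b-a}\int_a^b f(x)\,dx = \frac{b-a}{2}\int_0^1 (1-2t)\,f'\bigl(ta+(1-t)b\bigr)\,dt,
\end{equation*}
obtained by two integration-by-parts steps after the substitution $x=ta+(1-t)b$, with H\"older's inequality and Jensen's inequality in its concave form applied to $|f'|^q$. I would simply quote this identity and work with it directly.

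After taking absolute values inside the integral and splitting at $t=1/2$, where the kernel $1-2t$ changes sign, the problem reduces to bounding two non-negative integrals on $[0,1/2]$ and $[1/2,1]$. On each piece I would apply H\"older's inequality with conjugate exponents $p=q/(q-1)$ and $q$, separating the weight $|1-2t|$ from $|f'|$. A direct computation yields
\begin{equation*}
\left(\int_0^{1/2}(1-2t)^{p}\,dt\right)^{1/p}=\left(\int_{1/2}^1 (2t-1)^{p}\,dt\right)^{1/p}=\left(\frac{q-1}{2(2q-1)}\right)^{(q-1)/q}.
\end{equation*}

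For the remaining $L^q$ factors I would invoke concavity of $|f'|^q$ together with Jensen's inequality (in its concave form $\mathbb{E}\,\phi(X)\leq\phi(\mathbb{E}\,X)$) to obtain
\begin{equation*}
\int_{0}^{1/2}|f'(ta+(1-t)b)|^{q}\,dt \leq \tfrac{1}{2}\left|f'\!\left(\tfrac{a+3b}{4}\right)\right|^{q},
\end{equation*}
and analogously $\int_{1/2}^{1}|f'(ta+(1-t)b)|^{q}\,dt \leq \tfrac{1}{2}|f'((3a+b)/4)|^{q}$, since the unweighted $t$-averages of the affine argument $ta+(1-t)b$ over $[0,1/2]$ and $[1/2,1]$ are exactly the quarter points $(a+3b)/4$ and $(3a+b)/4$.

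I expect the only real obstacle to be bookkeeping the numerical constants: the powers of $2$ coming from H\"older (from integrating $(1-2t)^p$ on a subinterval of length $1/2$) and from the $(1/q)$-th power on the Jensen step (from the length $1/2$ of each subinterval) must combine as $2^{-(q-1)/q}\cdot 2^{-1/q}=1/2$, which together with the $(b-a)/2$ prefactor of the identity produces the clean constant $\tfrac{b-a}{4}\bigl(\tfrac{q-1}{2q-1}\bigr)^{1-1/q}$. Summing the two estimates then gives exactly (\ref{14}).
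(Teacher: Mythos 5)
Your argument is correct, and in fact this theorem is not proved in the paper at all: it is quoted in the introduction as background from the cited work of Alomari, Darus and K\i rmac\i\ (reference \cite{alo2}), so there is no in-paper proof to compare against. Your reconstruction is essentially the standard proof from that source: the Dragomir--Agarwal identity, the split at $t=1/2$ where the kernel changes sign, H\"older with exponents $p=q/(q-1)$ and $q$ on each half, and the concave form of Jensen applied to $\lvert f'\rvert^{q}$, with the unweighted averages of $ta+(1-t)b$ over $[0,1/2]$ and $[1/2,1]$ landing exactly at the quarter points $\tfrac{a+3b}{4}$ and $\tfrac{3a+b}{4}$. All the constants check out: $\int_{0}^{1/2}(1-2t)^{p}\,dt=\tfrac{q-1}{2(2q-1)}$, and the two powers of $2$ recombine as $2^{-(q-1)/q}\cdot 2^{-1/q}=\tfrac12$, giving the stated bound. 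The only loose end is the endpoint $q=1$, which the theorem formally allows but for which $p=q/(q-1)$ is undefined; there you should instead bound $\lvert 1-2t\rvert\le 1$ directly and apply Jensen to the concave function $\lvert f'\rvert$, which yields the same inequality with the convention $\bigl(\tfrac{q-1}{2q-1}\bigr)^{1-1/q}=1$ at $q=1$. (The hypothesis ``for some fixed $s\in(0,1]$'' is vestigial --- $s$ appears nowhere in the conclusion --- so it is correct that your proof never uses it.)
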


In \cite{ms}, Tun\c{c} and Balge\c{c}ti\ obtained inequalities for
differentiable convex functions which are connected with a new type integral
inequality, as follow:

\begin{lemma}
\label{l1}Let $f:J\rightarrow 
\mathbb{R}
$ be a differentiable function on $J^{\circ }$. If $f^{\prime }\in L\left[
a,b\right] $, then
\end{lemma}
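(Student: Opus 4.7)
The plan is to establish the expected Bullen-type identity
\[
\frac{1}{b-a}\int_{a}^{b}f(x)\,dx - \frac{1}{2}\left[\frac{f(a)+f(b)}{2}+f\!\left(\frac{a+b}{2}\right)\right] = (b-a)\int_{0}^{1}K(t)\,f'\bigl(ta+(1-t)b\bigr)\,dt,
\]
where the kernel
\[
K(t)=
\begin{cases}
t-\tfrac{1}{4}, & t\in[0,1/2],\\
t-\tfrac{3}{4}, & t\in[1/2,1],
\end{cases}
\]
is piecewise-linear with a jump at $t=1/2$ tailored precisely to produce the midpoint term $f\bigl(\tfrac{a+b}{2}\bigr)$ with the Bullen weight $1/2$.

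The strategy is integration by parts applied separately on $[0,1/2]$ and $[1/2,1]$. Setting $u(t)=K(t)$ and $dv=f'(ta+(1-t)b)\,dt$, the chain rule yields $v=-\frac{1}{b-a}f(ta+(1-t)b)$, while $du=dt$ on each open subinterval since $K$ is linear there. Evaluating the boundary contributions uses $K(0)=-\tfrac{1}{4}$, $K(1/2^{-})=\tfrac{1}{4}$, $K(1/2^{+})=-\tfrac{1}{4}$, $K(1)=\tfrac{1}{4}$; these feed $f(a)$ and $f(b)$ each with weight $-\tfrac{1}{4(b-a)}$, while the two one-sided contributions at $t=1/2$ combine (with the same sign, rather than cancelling) to give weight $-\tfrac{1}{2(b-a)}$ on $f\bigl(\tfrac{a+b}{2}\bigr)$.

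The two remaining $\int v\,du$ pieces combine into $\frac{1}{b-a}\int_{0}^{1}f(ta+(1-t)b)\,dt$, and the substitution $x=ta+(1-t)b$ converts this into $\frac{1}{(b-a)^{2}}\int_{a}^{b}f(x)\,dx$. Multiplying the assembled expression through by $(b-a)$ then matches the claimed identity term-for-term.

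The main obstacle is strictly bookkeeping rather than ideas: the chain rule introduces a factor $a-b$ easily confused with $b-a$, and one must verify that the two one-sided limits of $K$ at $t=1/2$ contribute additively rather than cancelling, so that the coefficient of $f\bigl(\tfrac{a+b}{2}\bigr)$ comes out to $-\tfrac{1}{2(b-a)}$ rather than $0$. Organizing the two half-interval calculations in parallel columns, and only summing at the very end, is the cleanest way to avoid sign errors.
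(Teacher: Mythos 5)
Your proposal proves the wrong identity. The lemma (equation (\ref{se1})) is not the classical Bullen identity: its left-hand side contains the term $\frac{bf(a)-af(b)}{b-a}$, which is a weighted combination of the endpoint values (for $f(x)=x$ it equals $0$, not $\frac{a+b}{2}$), whereas you have targeted $\frac{f(a)+f(b)}{2}$. Correspondingly, the right-hand side of (\ref{se1}) is not a single integral against a universal kernel $K(t)$ independent of $a,b$; it is the sum of two integrals
\begin{equation*}
\frac{1}{4}\int_{0}^{1}\left( tb+(1-t)a\right) f^{\prime }\left( \tfrac{1-t}{2}b+\tfrac{1+t}{2}a\right) dt
+\frac{1}{4}\int_{0}^{1}\left( ta+(1-t)b\right) f^{\prime }\left( \tfrac{1-t}{2}a+\tfrac{1+t}{2}b\right) dt,
\end{equation*}
whose weights $tb+(1-t)a$ and $ta+(1-t)b$ depend on $a$ and $b$ and are not mean-zero; this is exactly why the non-symmetric term $\frac{bf(a)-af(b)}{b-a}$ appears on the left. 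Your kernel computation is internally consistent for the standard Bullen identity, but that identity is a different statement and does not imply (\ref{se1}).

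The repair is structurally similar to what you attempted but must start from the stated right-hand side. Integrate each of the two integrals by parts with $u=tb+(1-t)a$ (respectively $ta+(1-t)b$) and $dv=f^{\prime}(\cdot)\,dt$; since the argument of $f^{\prime}$ in the first integral runs from $\frac{a+b}{2}$ at $t=0$ to $a$ at $t=1$ with derivative $\frac{a-b}{2}$, one gets $v=-\frac{2}{b-a}f\left(\frac{1-t}{2}b+\frac{1+t}{2}a\right)$, and the boundary terms produce $-\frac{b f(a)}{2(b-a)}+\frac{a}{2(b-a)}f\left(\frac{a+b}{2}\right)$, while the remaining integral gives $\frac{1}{b-a}\int_{a}^{(a+b)/2}f(x)\,dx$ after the substitution $x=\frac{1-t}{2}b+\frac{1+t}{2}a$. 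The mirror integral gives $\frac{a f(b)}{2(b-a)}-\frac{b}{2(b-a)}f\left(\frac{a+b}{2}\right)+\frac{1}{b-a}\int_{(a+b)/2}^{b}f(x)\,dx$. Adding, the two midpoint contributions combine to $-\frac{1}{2}f\left(\frac{a+b}{2}\right)$ and the endpoint terms to $-\frac{1}{2}\cdot\frac{bf(a)-af(b)}{b-a}$, which is precisely (\ref{se1}).
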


\begin{eqnarray}
&&\frac{1}{b-a}\int_{a}^{b}f\left( x\right) dx-\frac{1}{2}\left( \frac{%
bf\left( a\right) -af\left( b\right) }{b-a}+f\left( \frac{a+b}{2}\right)
\right)  \label{se1} \\
&=&\frac{1}{4}\int_{0}^{1}\left( tb+\left( 1-t\right) a\right) f^{\prime
}\left( \frac{1-t}{2}b+\frac{1+t}{2}a\right) dt  \notag \\
&&+\frac{1}{4}\int_{0}^{1}\left( ta+\left( 1-t\right) b\right) f^{\prime
}\left( \frac{1-t}{2}a+\frac{1+t}{2}b\right) dt  \notag
\end{eqnarray}

for each $t\in \left[ 0,1\right] $ and $a,b\in J.$

\begin{theorem}
\cite{ms}Let $f:J\rightarrow 
\mathbb{R}
$ be a differentiable function on $J^{\circ }$. If $\left\vert f^{\prime
}\right\vert $\ is convex on $J$, then%
\begin{eqnarray}
&&\left\vert \frac{1}{b-a}\int_{a}^{b}f\left( x\right) dx-\frac{1}{2}\left( 
\frac{bf\left( a\right) -af\left( b\right) }{b-a}+f\left( \frac{a+b}{2}%
\right) \right) \right\vert  \label{s1} \\
&\leq &\left( \frac{5}{48}a+\frac{7}{48}b\right) \left\vert f^{\prime
}\left( a\right) \right\vert +\left( \frac{7}{48}a+\frac{5}{48}b\right)
\left\vert f^{\prime }\left( b\right) \right\vert  \notag
\end{eqnarray}%
for each $a,b\in J$.
\end{theorem}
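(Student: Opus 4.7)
The strategy is to start from the identity in Lemma \ref{l1}, take absolute values on both sides, and then apply the triangle inequality to reduce the problem to bounding two integrals of the form
\begin{equation*}
I_{1}=\int_{0}^{1}\bigl(tb+(1-t)a\bigr)\bigl|f^{\prime }\bigl(\tfrac{1-t}{2}b+\tfrac{1+t}{2}a\bigr)\bigr|\,dt,\qquad
I_{2}=\int_{0}^{1}\bigl(ta+(1-t)b\bigr)\bigl|f^{\prime }\bigl(\tfrac{1-t}{2}a+\tfrac{1+t}{2}b\bigr)\bigr|\,dt,
\end{equation*}
so that the desired quantity is dominated by $\tfrac{1}{4}(I_{1}+I_{2})$. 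Because the paper's context is positive real numbers, I will treat the linear factors $tb+(1-t)a$ and $ta+(1-t)b$ as nonnegative, which is exactly what allows the absolute values to pass through these weights without extra terms.

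Next, I would exploit that the argument $\tfrac{1-t}{2}b+\tfrac{1+t}{2}a$ is a convex combination of $a$ and $b$ with weights $\tfrac{1+t}{2}$ and $\tfrac{1-t}{2}$ respectively, and similarly $\tfrac{1-t}{2}a+\tfrac{1+t}{2}b$ has weights $\tfrac{1-t}{2}$ on $a$ and $\tfrac{1+t}{2}$ on $b$. Applying the convexity hypothesis on $|f^{\prime }|$ gives the pointwise bounds
\begin{equation*}
\bigl|f^{\prime }\bigl(\tfrac{1-t}{2}b+\tfrac{1+t}{2}a\bigr)\bigr|\leq \tfrac{1+t}{2}|f^{\prime }(a)|+\tfrac{1-t}{2}|f^{\prime }(b)|,
\end{equation*}
and the analogous bound with $a,b$ swapped for the second integrand. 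Substituting these into $I_{1}$ and $I_{2}$ reduces everything to four elementary polynomial integrals of the form $\int_{0}^{1}(\alpha t+\beta)(\gamma t+\delta)\,dt$.

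The main remaining work is careful bookkeeping: expand each product, integrate $1,t,t^{2}$ termwise, and collect coefficients of $|f^{\prime }(a)|$ and $|f^{\prime }(b)|$. A direct computation should give
\begin{equation*}
I_{1}\leq \tfrac{4a+5b}{12}|f^{\prime }(a)|+\tfrac{2a+b}{12}|f^{\prime }(b)|,\qquad
I_{2}\leq \tfrac{a+2b}{12}|f^{\prime }(a)|+\tfrac{5a+4b}{12}|f^{\prime }(b)|,
\end{equation*}
so that after dividing by $4$ and summing, the coefficient of $|f^{\prime }(a)|$ becomes $\tfrac{(4a+5b)+(a+2b)}{48}=\tfrac{5a+7b}{48}$ and the coefficient of $|f^{\prime }(b)|$ becomes $\tfrac{(2a+b)+(5a+4b)}{48}=\tfrac{7a+5b}{48}$, matching (\ref{s1}).

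I do not expect any conceptual obstacle; the only real risk is an arithmetic slip when evaluating the four integrals and combining them. To minimize errors I would organize the two integrals in parallel (noting the $a\leftrightarrow b$ symmetry between $I_{1}$ and $I_{2}$), compute $I_{1}$ in full, and obtain $I_{2}$ by swapping $a$ and $b$ in the final expression rather than redoing the algebra from scratch.
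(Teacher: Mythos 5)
Your proposal is correct and follows essentially the same route as the paper: it is exactly the specialization to $s=1$ of the paper's proof of Theorem \ref{2} (apply Lemma \ref{l1}, pass the modulus through the nonnegative weights $tb+(1-t)a$ and $ta+(1-t)b$, bound $|f'|$ at the convex combinations by convexity, and evaluate the resulting polynomial integrals). Your computed coefficients $\tfrac{4a+5b}{12}$, $\tfrac{2a+b}{12}$, $\tfrac{a+2b}{12}$, $\tfrac{5a+4b}{12}$ check out and combine to the stated $\tfrac{5a+7b}{48}$ and $\tfrac{7a+5b}{48}$.
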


\begin{theorem}
\cite{ms}Let$\ f:J\rightarrow 
\mathbb{R}
$ be$\ $a\ differentiable\ function on$\ J^{\circ }$. If\ $\left\vert
f^{\prime }\right\vert ^{q}\ $is\ convex\ on$\ \left[ a,b\right] \ $and$\
q>1\ $with$\ \frac{1}{p}+\frac{1}{q}=1$,$\ $then%
\begin{eqnarray}
&&\left\vert \frac{1}{b-a}\int_{a}^{b}f\left( x\right) dx-\frac{1}{2}\left( 
\frac{bf\left( a\right) -af\left( b\right) }{b-a}+f\left( \frac{a+b}{2}%
\right) \right) \right\vert  \label{s2} \\
&\leq &\frac{1}{4^{1+1/q}}L_{p}\left( a,b\right) \left[ \left[ \left\vert
f^{\prime }\left( b\right) \right\vert ^{q}+3\left\vert f^{\prime }\left(
a\right) \right\vert ^{q}\right] ^{\frac{1}{q}}+\left[ \left\vert f^{\prime
}\left( a\right) \right\vert ^{q}+3\left\vert f^{\prime }\left( b\right)
\right\vert ^{q}\right] ^{\frac{1}{q}}\right]  \notag
\end{eqnarray}
\end{theorem}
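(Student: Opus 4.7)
My plan is to deduce (\ref{s2}) from the identity in Lemma \ref{l1} by successively applying the triangle inequality, H\"{o}lder's inequality (with exponents $p$ and $q$), and the convexity of $\left\vert f^{\prime }\right\vert ^{q}$, closely paralleling the strategy behind (\ref{s1}).

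First, I would take absolute values on both sides of (\ref{se1}) and apply the triangle inequality, leaving two integrals of the form
\begin{equation*}
I_{1}=\int_{0}^{1}\bigl(tb+(1-t)a\bigr)\,\Bigl\vert f^{\prime }\!\left( \tfrac{1-t}{2}b+\tfrac{1+t}{2}a\right) \Bigr\vert \,dt,
\end{equation*}
and $I_{2}$ with the roles of $a$ and $b$ reversed. (For $0<a<b$ the linear weight is positive on $[0,1]$, which is what makes $L_{p}(a,b)$ meaningful.) To each $I_{j}$ I would apply H\"{o}lder's inequality with the split that raises the linear weight to the $p$-th power and the derivative to the $q$-th power.

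The weight integral is computed by the substitution $u=tb+(1-t)a$, giving
\begin{equation*}
\int_{0}^{1}\bigl(tb+(1-t)a\bigr)^{p}\,dt=\frac{b^{p+1}-a^{p+1}}{(p+1)(b-a)}=L_{p}^{\,p}(a,b),
\end{equation*}
which produces exactly the factor $L_{p}(a,b)$ in the final bound. For the derivative integral, I would observe that $\tfrac{1-t}{2}b+\tfrac{1+t}{2}a$ is the convex combination $\tfrac{1+t}{2}\,a+\tfrac{1-t}{2}\,b$, so convexity of $\left\vert f^{\prime }\right\vert ^{q}$ yields
\begin{equation*}
\Bigl\vert f^{\prime }\!\left( \tfrac{1-t}{2}b+\tfrac{1+t}{2}a\right) \Bigr\vert^{q}\leq \tfrac{1+t}{2}\left\vert f^{\prime }(a)\right\vert ^{q}+\tfrac{1-t}{2}\left\vert f^{\prime }(b)\right\vert ^{q},
\end{equation*}
and integrating over $[0,1]$ gives $\tfrac{1}{4}\bigl(3\left\vert f^{\prime }(a)\right\vert ^{q}+\left\vert f^{\prime }(b)\right\vert ^{q}\bigr)$; the symmetric computation for $I_{2}$ produces $\tfrac{1}{4}\bigl(3\left\vert f^{\prime }(b)\right\vert ^{q}+\left\vert f^{\prime }(a)\right\vert ^{q}\bigr)$.

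Finally, adding the two H\"{o}lder bounds, combining the outer $\tfrac{1}{4}$ from Lemma \ref{l1} with the $\tfrac{1}{4^{1/q}}$ pulled out of the $q$-th roots, and inserting the emergent factor $L_{p}(a,b)$ produces the constant $4^{-(1+1/q)}\,L_{p}(a,b)$ multiplying the symmetric pair of $q$-norms, which is exactly (\ref{s2}). I do not expect a genuine obstacle here; the main delicacies are bookkeeping the convex-combination coefficients so that convexity is applied with the correct weights, and recognising the weight integral as $L_{p}^{\,p}(a,b)$ rather than some more complicated mean.
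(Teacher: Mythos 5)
Your proposal is correct and follows essentially the same route as the paper: the identity of Lemma \ref{l1}, the triangle inequality, H\"{o}lder's inequality with the weight $\left( tb+\left( 1-t\right) a\right)^{p}$ giving $L_{p}^{p}(a,b)$, and convexity of $\left\vert f^{\prime }\right\vert ^{q}$ producing the weights $\tfrac{3}{4}$ and $\tfrac{1}{4}$. This is precisely the $s=1$ case of the argument the paper gives for Theorem \ref{3}, where $2^{s}(s+1)=4$ and $2^{s+1}-1=3$ recover the constant $4^{-(1+1/q)}$ and the coefficients $3$ in (\ref{s2}).
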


\begin{theorem}
\cite{ms}Let\ $f:J\rightarrow 
\mathbb{R}
$\ be\ a\ differentiable\ function on\ $J^{\circ }$. If$\ $\ $\left\vert
f^{\prime }\right\vert ^{q}\ $is$\ $convex\ on$\ \left[ a,b\right] \ $and$\
q\geq 1$, then%
\begin{eqnarray}
&&\left\vert \frac{1}{b-a}\int_{a}^{b}f\left( x\right) dx-\frac{1}{2}\left( 
\frac{bf\left( a\right) -af\left( b\right) }{b-a}+f\left( \frac{a+b}{2}%
\right) \right) \right\vert  \label{s3} \\
&\leq &\frac{A^{1-\frac{1}{q}}\left( a,b\right) }{4\times 12^{\frac{1}{q}}}%
\left\{ \left[ \left\vert f^{\prime }(b)\right\vert ^{q}\left( 2a+b\right)
+\left\vert f^{\prime }(a)\right\vert ^{q}\left( 4a+5b\right) \right] ^{%
\frac{1}{q}}\right.  \notag \\
&&+\left. \left[ \left\vert f^{\prime }(a)\right\vert ^{q}\left( a+2b\right)
+\left\vert f^{\prime }(b)\right\vert ^{q}\left( 5a+4b\right) \right] ^{%
\frac{1}{q}}\right\}  \notag
\end{eqnarray}
\end{theorem}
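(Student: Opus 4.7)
The plan is to start from the identity (\ref{se1}) in Lemma \ref{l1}, take absolute values, and estimate each of the two resulting integrals by the power-mean inequality combined with the convexity of $|f'|^q$.

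Writing $w_1(t)=tb+(1-t)a$ and $w_2(t)=ta+(1-t)b$, Lemma \ref{l1} expresses the left-hand side of the theorem as $\tfrac14 \int_0^1 w_1(t) f'(\xi_1(t))\,dt + \tfrac14 \int_0^1 w_2(t) f'(\xi_2(t))\,dt$, where $\xi_1(t)=\tfrac{1-t}{2}b+\tfrac{1+t}{2}a$ and $\xi_2(t)=\tfrac{1-t}{2}a+\tfrac{1+t}{2}b$. After the triangle inequality, I would apply the power-mean form of H\"older's inequality to each, writing $w_i(t)|f'(\xi_i(t))| = [w_i(t)]^{1-1/q}\bigl[w_i(t)|f'(\xi_i(t))|^q\bigr]^{1/q}$, which is legal since $q\geq 1$. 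The capacity factor is $\bigl(\int_0^1 w_i(t)\,dt\bigr)^{1-1/q}=A(a,b)^{1-1/q}$, accounting for the prefactor in the claimed bound.

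The substantive step is bounding the second factor. Noting that $\tfrac{1-t}{2}+\tfrac{1+t}{2}=1$, convexity of $|f'|^q$ gives
\[
|f'(\xi_1(t))|^q \leq \tfrac{1-t}{2}|f'(b)|^q + \tfrac{1+t}{2}|f'(a)|^q,
\]
and the mirror bound for $|f'(\xi_2(t))|^q$. Substituting these estimates reduces the matter to four elementary moment integrals; the key evaluations are
\[
\int_0^1 w_1(t)\,\tfrac{1-t}{2}\,dt = \frac{2a+b}{12}, \qquad \int_0^1 w_1(t)\,\tfrac{1+t}{2}\,dt = \frac{4a+5b}{12},
\]
with the symmetric pair $\tfrac{a+2b}{12}$ and $\tfrac{5a+4b}{12}$ arising from $w_2$. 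Assembling these produces the two brackets in the statement together with the overall constant $\tfrac{1}{4\cdot 12^{1/q}}$.

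The only real obstacle is bookkeeping: one must keep the $a$-labelled and $b$-labelled derivative values attached to the correct polynomial coefficients, so that $[|f'(b)|^q(2a+b)+|f'(a)|^q(4a+5b)]^{1/q}$ emerges from the first integral and $[|f'(a)|^q(a+2b)+|f'(b)|^q(5a+4b)]^{1/q}$ from the second. As a sanity check, $q=1$ collapses $A(a,b)^{1-1/q}$ to $1$, and summing the two brackets yields $\tfrac{1}{48}\bigl\{|f'(a)|(5a+7b)+|f'(b)|(7a+5b)\bigr\}$, which recovers \emph{exactly} the bound (\ref{s1}).
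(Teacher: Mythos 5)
Your proposal is correct and follows essentially the same route the paper uses: it is exactly the $s=1$ specialization of the argument given for Theorem \ref{4} (Lemma \ref{l1}, the power-mean inequality with weight $w_i(t)$, convexity of $\left\vert f^{\prime }\right\vert ^{q}$ applied at $\xi_i(t)=\frac{1-t}{2}\cdot+\frac{1+t}{2}\cdot$, and the four moment integrals, whose values $\frac{2a+b}{12}$, $\frac{4a+5b}{12}$, $\frac{a+2b}{12}$, $\frac{5a+4b}{12}$ agree with the paper's coefficients at $s=1$). Your $q=1$ consistency check against (\ref{s1}) also matches the paper's own remark that these bounds collapse to the earlier convex case.
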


For recent results and generalizations concerning Hadamard's inequality and
concepts of convexity and $s$-convexity see \cite{alo2}-\cite{ms} and the
references therein.

Throughout this paper we will use the following notations and conventions.
Let $J=\left[ 0,\infty \right) \subset 
\mathbb{R}
=\left( -\infty ,+\infty \right) ,$ and $u,v\in J$ with $0<u<v$ and $%
f^{\prime }\in L\left[ u,v\right] $ and%
\begin{equation*}
A\left( u,v\right) =\frac{u+v}{2},\text{ }G\left( u,v\right) =\sqrt{uv},
\end{equation*}
\begin{equation*}
L_{p}\left( u,v\right) =\left( \frac{v^{p+1}-u^{p+1}}{\left( p+1\right)
\left( v-u\right) }\right) ^{1/p},\text{ }u\neq v,\text{ }p\in 
\mathbb{R}
,\text{ }p\neq -1,0
\end{equation*}%
be the arithmetic mean, geometric mean, generalized logarithmic mean for $%
u,v>0$ respectively.

\section{Inequalities for $s$-convex functions and applications}

\begin{theorem}
\label{2}Let $f:J\rightarrow 
\mathbb{R}
$ be a differentiable function on $J^{\circ }$. If $\left\vert f^{\prime
}\right\vert $\ is $s$-convex on $\left[ a,b\right] $ for some fixed $s\in
\left( 0,1\right] $, then%
\begin{eqnarray}
&&\left\vert \frac{1}{b-a}\int_{a}^{b}f\left( x\right) dx-\frac{1}{2}\left( 
\frac{bf\left( a\right) -af\left( b\right) }{b-a}+f\left( \frac{a+b}{2}%
\right) \right) \right\vert  \label{se2} \\
&\leq &\frac{b\left( s2^{s+1}+s+2\right) +a\left( 2^{s+2}-s-2\right) }{%
2^{s+2}\left( s+1\right) \left( s+2\right) }\left\vert f^{\prime }\left(
a\right) \right\vert  \notag \\
&&+\frac{a\left( s2^{s+1}+s+2\right) +b\left( 2^{s+2}-s-2\right) }{%
2^{s+2}\left( s+1\right) \left( s+2\right) }\left\vert f^{\prime }\left(
b\right) \right\vert  \notag
\end{eqnarray}%
for each $x\in \left[ a,b\right] $.
\end{theorem}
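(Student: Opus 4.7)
The plan is to start from the integral identity in Lemma \ref{l1} and then estimate the two integrals on the right-hand side by exploiting the $s$-convexity of $|f'|$, followed by a routine computation of the resulting scalar integrals.

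First I would apply Lemma \ref{l1} to rewrite
$$\frac{1}{b-a}\int_{a}^{b}f(x)\,dx-\frac{1}{2}\left(\frac{bf(a)-af(b)}{b-a}+f\left(\frac{a+b}{2}\right)\right)=\frac{1}{4}(I_{1}+I_{2}),$$
where $I_{1}$ and $I_{2}$ are the two integrals in \eqref{se1}. By the triangle inequality $|\tfrac14(I_1+I_2)|\le \tfrac14(|I_1|+|I_2|)$, moving the absolute value inside each integral. Writing the arguments of $f'$ as convex combinations $\tfrac{1-t}{2}b+\tfrac{1+t}{2}a$ and $\tfrac{1-t}{2}a+\tfrac{1+t}{2}b$ with convex coefficients summing to one, the $s$-convexity of $|f'|$ in the second sense (inequality \eqref{105}) yields
$$\left|f'\!\left(\tfrac{1-t}{2}b+\tfrac{1+t}{2}a\right)\right|\le \left(\tfrac{1-t}{2}\right)^{s}|f'(b)|+\left(\tfrac{1+t}{2}\right)^{s}|f'(a)|,$$
and similarly for the symmetric argument in $I_{2}$ with $a$ and $b$ interchanged. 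This reduces the problem to a linear combination of four scalar integrals.

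Next I would evaluate the four integrals
$$\int_{0}^{1}t\left(\tfrac{1-t}{2}\right)^{s}dt,\quad \int_{0}^{1}(1-t)\left(\tfrac{1-t}{2}\right)^{s}dt,\quad \int_{0}^{1}t\left(\tfrac{1+t}{2}\right)^{s}dt,\quad \int_{0}^{1}(1-t)\left(\tfrac{1+t}{2}\right)^{s}dt.$$
The first two are immediate by the substitution $u=1-t$, giving $\tfrac{1}{2^{s}(s+1)(s+2)}$ and $\tfrac{1}{2^{s}(s+2)}$ respectively. The last two require the substitution $u=1+t$ and the elementary antiderivatives of $u^{s}$ and $u^{s+1}$ on $[1,2]$; after a small amount of algebra they simplify to $\tfrac{s\cdot 2^{s+1}+1}{2^{s}(s+1)(s+2)}$ and $\tfrac{2^{s+2}-s-3}{2^{s}(s+1)(s+2)}$.

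Finally I would collect the coefficients of $|f'(a)|$ and $|f'(b)|$ in $\tfrac14(|I_1|+|I_2|)$. The coefficient of $|f'(a)|$ comes from $\tfrac14$ times $\int_0^1(tb+(1-t)a)(\tfrac{1+t}{2})^{s}dt+\int_0^1(ta+(1-t)b)(\tfrac{1-t}{2})^{s}dt$; plugging in the four integrals above and combining over the common denominator $2^{s+2}(s+1)(s+2)$ produces exactly $b(s\,2^{s+1}+s+2)+a(2^{s+2}-s-2)$ in the numerator, and by the $a\leftrightarrow b$ symmetry of the construction the coefficient of $|f'(b)|$ is the one stated in \eqref{se2}. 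The main obstacle is purely bookkeeping: keeping the four pieces aligned so that the final simplification produces the compact expressions $s\,2^{s+1}+s+2$ and $2^{s+2}-s-2$, particularly for the $(\tfrac{1+t}{2})^{s}$ integrals where no symmetry is available.
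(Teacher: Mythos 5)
Your proposal is correct and follows essentially the same route as the paper's own proof: apply Lemma \ref{l1}, move the absolute value inside, use the $s$-convexity of $|f'|$ on the convex combinations $\frac{1-t}{2}b+\frac{1+t}{2}a$ and $\frac{1-t}{2}a+\frac{1+t}{2}b$, and evaluate the resulting elementary integrals (your four building-block integrals recombine into exactly the four weighted integrals the paper computes, with matching values and the same final coefficients). The only difference is cosmetic bookkeeping, and your version even avoids a typographical slip in the paper's first display where the exponent $s$ is misplaced inside the argument of $f'$.
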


\begin{proof}
Using Lemma \ref{l1} and from properties of modulus, and since $\left\vert
f^{\prime }\right\vert $\ is $s$-convex on $J$, then we obtain%
\begin{eqnarray*}
&&\left\vert \frac{1}{b-a}\int_{a}^{b}f\left( x\right) dx+\frac{af\left(
b\right) -bf\left( a\right) }{2\left( b-a\right) }-\frac{1}{2}f\left( \frac{%
a+b}{2}\right) \right\vert \\
&\leq &\frac{1}{4}\int_{0}^{1}\left( tb+\left( 1-t\right) a\right)
\left\vert f^{\prime }\left( \left( \frac{1-t}{2}\right) ^{s}b+\left( \frac{%
1+t}{2}\right) ^{s}a\right) \right\vert dt \\
&&+\frac{1}{4}\int_{0}^{1}\left( ta+\left( 1-t\right) b\right) \left\vert
f^{\prime }\left( \left( \frac{1-t}{2}\right) ^{s}a+\left( \frac{1+t}{2}%
\right) ^{s}b\right) \right\vert dt \\
&\leq &\frac{1}{4}\int_{0}^{1}\left( tb+\left( 1-t\right) a\right) \left[
\left( \frac{1-t}{2}\right) ^{s}\left\vert f^{\prime }\left( b\right)
\right\vert +\left( \frac{1+t}{2}\right) ^{s}\left\vert f^{\prime }\left(
a\right) \right\vert \right] dt \\
&&+\frac{1}{4}\int_{0}^{1}\left( ta+\left( 1-t\right) b\right) \left[ \left( 
\frac{1-t}{2}\right) ^{s}\left\vert f^{\prime }\left( a\right) \right\vert
+\left( \frac{1+t}{2}\right) ^{s}\left\vert f^{\prime }\left( b\right)
\right\vert \right] dt \\
&\leq &\frac{1}{4}\int_{0}^{1}\left( tb+\left( 1-t\right) a\right) \left( 
\frac{1-t}{2}\right) ^{s}\left\vert f^{\prime }\left( b\right) \right\vert
dt+\frac{1}{4}\int_{0}^{1}\left( tb+\left( 1-t\right) a\right) \left( \frac{%
1+t}{2}\right) ^{s}\left\vert f^{\prime }\left( a\right) \right\vert dt \\
&&+\frac{1}{4}\int_{0}^{1}\left( ta+\left( 1-t\right) b\right) \left( \frac{%
1-t}{2}\right) ^{s}\left\vert f^{\prime }\left( a\right) \right\vert dt+%
\frac{1}{4}\int_{0}^{1}\left( ta+\left( 1-t\right) b\right) \left( \frac{1+t%
}{2}\right) ^{s}\left\vert f^{\prime }\left( b\right) \right\vert dt \\
&=&\frac{1}{2^{s+2}}\allowbreak \frac{as+a+b}{\left( s+1\right) \left(
s+2\right) }\left\vert f^{\prime }\left( b\right) \right\vert +\frac{1}{%
2^{s+2}}\frac{b\left( s2^{s+1}+1\right) +a\left( 2^{s+2}-s-3\right) }{\left(
s+1\right) \left( s+2\right) }\left\vert f^{\prime }\left( a\right)
\right\vert \\
&&+\frac{1}{2^{s+2}}\frac{bs+b+a}{\left( s+1\right) \left( s+2\right) }%
\left\vert f^{\prime }\left( a\right) \right\vert +\frac{1}{2^{s+2}}\frac{%
a\left( s2^{s+1}+1\right) +b\left( 2^{s+2}-s-3\right) }{\left( s+1\right)
\left( s+2\right) }\left\vert f^{\prime }\left( b\right) \right\vert
\end{eqnarray*}%
\begin{eqnarray*}
&=&\frac{1}{2^{s+2}}\left( \frac{as+a+b}{\left( s+1\right) \left( s+2\right) 
}+\frac{a\left( s2^{s+1}+1\right) +b\left( 2^{s+2}-s-3\right) }{\left(
s+1\right) \left( s+2\right) }\right) \allowbreak \left\vert f^{\prime
}\left( b\right) \right\vert \\
&&+\frac{1}{2^{s+2}}\left( \frac{bs+b+a}{\left( s+1\right) \left( s+2\right) 
}+\frac{b\left( s2^{s+1}+1\right) +a\left( 2^{s+2}-s-3\right) }{\left(
s+1\right) \left( s+2\right) }\right) \left\vert f^{\prime }\left( a\right)
\right\vert \\
&=&\frac{1}{2^{s+2}}\left( \frac{a\left( s2^{s+1}+s+2\right) +b\left(
2^{s+2}-s-2\right) }{\left( s+1\right) \left( s+2\right) }\right) \left\vert
f^{\prime }\left( b\right) \right\vert \\
&&+\frac{1}{2^{s+2}}\left( \frac{b\left( s2^{s+1}+s+2\right) +a\left(
2^{s+2}-s-2\right) }{\left( s+1\right) \left( s+2\right) }\right) \left\vert
f^{\prime }\left( a\right) \right\vert
\end{eqnarray*}
\end{proof}

\begin{proposition}
\label{21} Let $a,b\in J^{\circ },\ 0<a<b$ and $s\in \left( 0,1\right] $ then%
\begin{eqnarray}
&&\left\vert L_{s}^{s}\left( a,b\right) +\frac{\left( s-1\right) G^{2}\left(
a,b\right) L_{s-2}^{s-2}\left( a,b\right) -A^{s}\left( a,b\right) }{2}%
\right\vert  \notag \\
&\leq &\frac{s\left[ \left( ab^{s-1}+a^{s-1}b\right) \left(
s2^{s+1}+s+2\right) +\left( a^{s}+b^{s}\right) \left( 2^{s+2}-s-2\right) %
\right] }{2^{s+2}\left( s+1\right) \left( s+2\right) }  \label{se3}
\end{eqnarray}
\end{proposition}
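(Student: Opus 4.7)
The plan is to apply Theorem \ref{2} to the specific function $f(x) = x^{s}$ on $[a,b]$, because the three pieces on the left-hand side of Theorem \ref{2} are designed to match exactly $L_{s}^{s}$, $(s-1)G^{2}L_{s-2}^{s-2}$, and $A^{s}$, while $|f'(a)|, |f'(b)|$ will produce the $sa^{s-1}, sb^{s-1}$ factors visible on the right of (\ref{se3}).

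The first step is to verify the hypothesis that $|f'|$ is $s$-convex on $[a,b]$. Here $|f'(x)| = sx^{s-1}$, and for $s \in (0,1]$ its second derivative $s(s-1)(s-2)x^{s-3}$ is nonnegative on $(0,\infty)$, so $|f'|$ is convex and nonnegative there. Since $t^{s} \geq t$ on $[0,1]$ when $s \in (0,1]$, any nonnegative convex function is automatically $s$-convex in the second sense, so the hypothesis of Theorem \ref{2} is met on $[a,b] \subset J^{\circ}$.

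The second step is to translate each term in the LHS of Theorem \ref{2} into the means appearing in (\ref{se3}). Directly from the definitions,
\[
\frac{1}{b-a}\int_{a}^{b} x^{s}\,dx = L_{s}^{s}(a,b), \qquad f\!\left(\frac{a+b}{2}\right) = A^{s}(a,b),
\]
and, using the identity $(s-1)L_{s-2}^{s-2}(a,b) = \frac{b^{s-1}-a^{s-1}}{b-a}$ (interpreted as $0$ when $s=1$),
\[
\frac{bf(a)-af(b)}{b-a} = \frac{a^{s}b-ab^{s}}{b-a} = -ab\cdot\frac{b^{s-1}-a^{s-1}}{b-a} = -(s-1)G^{2}(a,b)L_{s-2}^{s-2}(a,b).
\]
Substituting these three identifications into the LHS of Theorem \ref{2} yields precisely $L_{s}^{s}(a,b) + \tfrac{1}{2}\bigl[(s-1)G^{2}L_{s-2}^{s-2} - A^{s}\bigr]$; the sign change in front of the $(s-1)G^{2}L_{s-2}^{s-2}$ term comes from its appearance inside the subtracted parenthesis of Theorem \ref{2}.

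Finally, substituting $|f'(a)| = sa^{s-1}$ and $|f'(b)| = sb^{s-1}$ into the right-hand side of Theorem \ref{2} and pulling out the common factor $\frac{s}{2^{s+2}(s+1)(s+2)}$ gives the right-hand side of (\ref{se3}) after regrouping the $(ab^{s-1}+a^{s-1}b)$ and $(a^{s}+b^{s})$ terms. There is no substantial obstacle: the whole argument is a direct specialization of Theorem \ref{2}, and the only point requiring a one-line justification is the $s$-convexity of $|f'|$, which follows from ordinary convexity plus nonnegativity.
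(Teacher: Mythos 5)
Your proposal is correct and is essentially the paper's own proof: the paper simply applies the inequality (\ref{se2}) to $f(x)=x^{s}$ with $\left\vert f^{\prime }(x)\right\vert =sx^{s-1}$. You supply the details the paper omits (the $s$-convexity of $\left\vert f^{\prime }\right\vert$ via nonnegativity plus ordinary convexity, and the identification of the three terms with $L_{s}^{s}$, $(s-1)G^{2}L_{s-2}^{s-2}$ and $A^{s}$), and all of these checks are accurate.
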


\begin{proof}
The proof follows from (\ref{se2}) applied\ to the $s$-convex function $%
f\left( x\right) =x^{s}$ and $\left\vert f^{\prime }\left( x\right)
\right\vert =sx^{s-1}.$
\end{proof}

\begin{proposition}
\label{22}Let $a,b\in J^{\circ }$,$\ 0<a<b$, $s\in \left( 0,1\right) $ then%
\begin{eqnarray}
&&\left\vert \frac{L_{s}^{s}\left( a,b\right) }{1-s}-\frac{sG^{2}\left(
a,b\right) L_{-1-s}^{-1-s}\left( a,b\right) +A^{1-s}\left( a,b\right) }{%
2\left( 1-s\right) }\ \right\vert  \label{se4} \\
&\leq &\frac{1}{2^{s+2}b^{s}}\left( \frac{a\left( s2^{s+1}+s+2\right)
+b\left( 2^{s+2}-s-2\right) }{\left( s+1\right) \left( s+2\right) }\right) 
\notag \\
&&+\frac{1}{2^{s+2}a^{s}}\left( \frac{a\left( s-2^{s+2}+2\right) -b\left(
s2^{s+1}+s+2\right) }{\left( s+1\right) \left( s+2\right) }\right)
\allowbreak  \notag
\end{eqnarray}
\end{proposition}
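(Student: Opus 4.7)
The plan is to apply Theorem \ref{2} with the specific choice $f(x) = \dfrac{x^{1-s}}{1-s}$, viewed as a function on $J^{\circ} = (0,\infty)$. Since $s \in (0,1)$ we have $1-s > 0$, so $f$ is well-defined and differentiable there, with $f'(x) = x^{-s}$ and $|f'(x)| = x^{-s}$.

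The first step is to check the hypothesis of Theorem \ref{2}, namely that $|f'|$ is $s$-convex on $[a,b]$. The function $x \mapsto x^{-s}$ is positive on $(0,\infty)$ and convex there, since its second derivative $s(s+1)x^{-s-2}$ is strictly positive. Any nonnegative convex function $\varphi$ is automatically $s$-convex in the second sense for $s \in (0,1]$: from the ordinary convexity inequality $\varphi(tx+(1-t)y) \leq t\varphi(x)+(1-t)\varphi(y)$ together with $t \leq t^s$ and $1-t \leq (1-t)^s$ on $[0,1]$ and the sign condition $\varphi \geq 0$, one gets $\varphi(tx+(1-t)y) \leq t^s \varphi(x)+(1-t)^s \varphi(y)$. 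Hence the theorem applies.

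Next, I would rewrite the three ingredients on the left side of \eqref{se2} in terms of the special means. A direct computation gives
\begin{equation*}
\frac{1}{b-a}\int_a^b f(x)\,dx \;=\; \frac{b^{2-s}-a^{2-s}}{(1-s)(2-s)(b-a)},
\end{equation*}
while the boundary combination unwinds as
\begin{equation*}
\frac{bf(a)-af(b)}{b-a} \;=\; \frac{ab\bigl(a^{-s}-b^{-s}\bigr)}{(1-s)(b-a)} \;=\; \frac{s\,G^{2}(a,b)\,L_{-1-s}^{-1-s}(a,b)}{1-s},
\end{equation*}
after recognizing $G^{2}(a,b)=ab$ and the generalized logarithmic mean $L_{-1-s}^{-1-s}(a,b) = \frac{a^{-s}-b^{-s}}{s(b-a)}$; finally $f\!\left(\frac{a+b}{2}\right) = \frac{A^{1-s}(a,b)}{1-s}$. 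Assembling these three pieces according to Theorem \ref{2} produces the quantity inside the absolute value in \eqref{se4}.

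The right-hand side then follows by substituting $|f'(a)| = a^{-s}$ and $|f'(b)| = b^{-s}$ into the bound of Theorem \ref{2} and pulling out the factors $a^{-s}$ and $b^{-s}$ to present the estimate in the form stated. The main obstacle is purely bookkeeping---matching the coefficients of $a^{-s}$ and $b^{-s}$ with the claimed numerators involving $(s2^{s+1}+s+2)$ and $(2^{s+2}-s-2)$---and closely parallels the algebra already carried out for Proposition \ref{21}; no new analytic idea is required.
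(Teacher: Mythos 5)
You follow exactly the route the paper takes: its entire proof of this proposition is the single sentence that (\ref{se4}) follows from (\ref{se2}) applied to $f(x)=x^{1-s}/(1-s)$ with $\left\vert f^{\prime }(x)\right\vert =x^{-s}$, and your additional checks (that $x^{-s}$ is nonnegative and convex, hence $s$-convex in the second sense; the identification of the three left-hand terms with the means $L_{1-s}^{1-s}$, $G^{2}L_{-1-s}^{-1-s}$ and $A^{1-s}$) are correct and go beyond what the paper supplies. The one real problem is the step you wave off as bookkeeping: it does not land on the inequality as printed. Substituting $\left\vert f^{\prime }(a)\right\vert =a^{-s}$ and $\left\vert f^{\prime }(b)\right\vert =b^{-s}$ into the right-hand side of (\ref{se2}) yields
\begin{equation*}
\frac{1}{2^{s+2}b^{s}}\cdot \frac{a\left( s2^{s+1}+s+2\right) +b\left(
2^{s+2}-s-2\right) }{\left( s+1\right) \left( s+2\right) }+\frac{1}{
2^{s+2}a^{s}}\cdot \frac{b\left( s2^{s+1}+s+2\right) +a\left(
2^{s+2}-s-2\right) }{\left( s+1\right) \left( s+2\right) },
\end{equation*}
whose second numerator is exactly the \emph{negative} of the one printed in (\ref{se4}); since $a\left( s-2^{s+2}+2\right) -b\left( s2^{s+1}+s+2\right) <0$ for $0<a<b$ and $s\in (0,1)$, the bound as printed cannot be what the substitution produces. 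Likewise, your own (correct) evaluation of the integral gives $L_{1-s}^{1-s}(a,b)/(1-s)$, not the $L_{s}^{s}(a,b)/(1-s)$ appearing on the left of (\ref{se4}). These are evidently misprints carried over from Proposition \ref{21} rather than flaws in your method, but a complete argument must carry out the substitution explicitly and either correct the stated inequality or reconcile the discrepancy; asserting that the algebra ``presents the estimate in the form stated'' is not true of the form as actually stated.
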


\begin{proof}
The proof follows from (\ref{se2}) applied\ to the $s$-convex function $%
f\left( x\right) =\frac{x^{1-s}}{1-s}$ and $\left\vert f^{\prime
}(x)\right\vert $ $=1/x^{s}$ with $s\in \left( 0,1\right) $.
\end{proof}

\begin{remark}
In (\ref{se2}), $\allowbreak $(\ref{se3}), if we take $s\rightarrow 1$, then
(\ref{se2}), $\allowbreak $(\ref{se3}) reduces to (\ref{s1}), \cite[%
Proposition 2]{ms}, respectively.
\end{remark}

\begin{theorem}
\label{3}Let $f:J\rightarrow 
\mathbb{R}
$ be a differentiable function on $J^{\circ }.$ If $\left\vert f^{\prime
}\right\vert $\ is $s$-convex on $\left[ a,b\right] $ for some fixed $s\in
\left( 0,1\right] $ and $q>1$ with $\frac{1}{p}+\frac{1}{q}=1$, then%
\begin{eqnarray}
&&\left\vert \frac{1}{b-a}\int_{a}^{b}f\left( x\right) dx-\frac{1}{2}\left( 
\frac{bf\left( a\right) -af\left( b\right) }{b-a}+f\left( \frac{a+b}{2}%
\right) \right) \right\vert  \label{se5} \\
&\leq &\frac{L_{p}\left( a,b\right) }{4\left( 2^{s}\left( s+1\right) \right)
^{\frac{1}{q}}}\left\{ \left( \left\vert f^{\prime }\left( b\right)
\right\vert ^{q}+\left( 2^{s+1}-1\right) \left\vert f^{\prime }\left(
a\right) \right\vert ^{q}\right) ^{\frac{1}{q}}\right.  \notag \\
&&+\left. \left( \left\vert f^{\prime }\left( a\right) \right\vert
^{q}+\left( 2^{s+1}-1\right) \left\vert f^{\prime }\left( b\right)
\right\vert ^{q}\right) ^{\frac{1}{q}}\right\}  \notag
\end{eqnarray}%
for each $x\in \left[ a,b\right] $.
\end{theorem}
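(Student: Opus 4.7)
The plan is to adapt the proof of Theorem \ref{2} by replacing the plain triangle-inequality factoring step with H\"older's inequality, so that the weights $tb+(1-t)a$ and $ta+(1-t)b$ get absorbed into an $L_p$ factor while the $s$-convex structure is invoked only on $|f'|^q$. I would start from the identity of Lemma \ref{l1}, take absolute values, and obtain a sum of two integrals of the shape $\frac{1}{4}\int_0^1 w(t)|f'(\xi(t))|\,dt$, where $w(t)$ is the appropriate linear weight in $t$ and $\xi(t)$ is the convex combination appearing in the lemma.

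To each integral I would apply H\"older's inequality with conjugate exponents $p$ and $q$. The $p$-integral of the linear weight is evaluated by the substitution $u=tb+(1-t)a$, giving
\begin{equation*}
\int_0^1 (tb+(1-t)a)^p\,dt \;=\; \frac{b^{p+1}-a^{p+1}}{(p+1)(b-a)} \;=\; L_p^{p}(a,b),
\end{equation*}
so this factor becomes $L_p(a,b)$; by symmetry of $L_p$ in $a$ and $b$, the second summand yields the same factor.

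For the $q$-integral I would invoke the $s$-convexity hypothesis, applied to $|f'|^{q}$ in the spirit of Theorems 5--7 above, to obtain the pointwise bound
\begin{equation*}
\left|f'\left(\frac{1-t}{2}b+\frac{1+t}{2}a\right)\right|^{q} \leq \left(\frac{1-t}{2}\right)^{s}|f'(b)|^{q}+\left(\frac{1+t}{2}\right)^{s}|f'(a)|^{q},
\end{equation*}
together with its $a\leftrightarrow b$ counterpart. The one-variable identities $\int_0^1\bigl((1-t)/2\bigr)^{s} dt = 1/(2^{s}(s+1))$ and $\int_0^1\bigl((1+t)/2\bigr)^{s} dt = (2^{s+1}-1)/(2^{s}(s+1))$ then collapse the $q$-integral in one summand to $\bigl(|f'(b)|^{q}+(2^{s+1}-1)|f'(a)|^{q}\bigr)/(2^{s}(s+1))$ and symmetrically in the other. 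Taking $q$-th roots and summing produces the right-hand side of (\ref{se5}).

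The main obstacle is not conceptual but organizational: one must keep the two summands aligned so that the common factor $L_p(a,b)/\bigl(4(2^{s}(s+1))^{1/q}\bigr)$ factors out cleanly, leaving the two bracketed $q$-roots in the symmetric form displayed in (\ref{se5}). A minor subtlety worth flagging is that the argument only closes if the hypothesis is read as ``$|f'|^{q}$ is $s$-convex'' (matching the convention used in the earlier theorems), since $s$-convexity of $|f'|$ alone does not in general transfer to its $q$-th power.
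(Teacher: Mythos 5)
Your proposal is correct and follows essentially the same route as the paper: Lemma \ref{l1}, H\"older's inequality with exponents $p$ and $q$ to produce the factor $L_{p}\left( a,b\right) $ from the linear weights, the $s$-convexity bound on $\left\vert f^{\prime }\right\vert ^{q}$, and the evaluations $\int_{0}^{1}\left( \frac{1-t}{2}\right) ^{s}dt=\frac{1}{2^{s}\left( s+1\right) }$ and $\int_{0}^{1}\left( \frac{1+t}{2}\right) ^{s}dt=\frac{2^{s+1}-1}{2^{s}\left( s+1\right) }$. Your closing remark is also well taken: the paper's proof indeed invokes $s$-convexity of $\left\vert f^{\prime }\right\vert ^{q}$ even though the theorem's hypothesis as stated only assumes it of $\left\vert f^{\prime }\right\vert $.
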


\begin{proof}
Using Lemma \ref{l1} and from properties of modulus, and since $\left\vert
f^{\prime }\right\vert $\ is $s$-convex on $J$, then we obtain%
\begin{eqnarray}
&&  \label{3.1} \\
&&\left\vert \frac{1}{b-a}\int_{a}^{b}f\left( x\right) dx+\frac{af\left(
b\right) -bf\left( a\right) }{2\left( b-a\right) }-\frac{1}{2}f\left( \frac{%
a+b}{2}\right) \right\vert  \notag \\
&\leq &\frac{1}{4}\int_{0}^{1}\left( tb+\left( 1-t\right) a\right)
\left\vert f^{\prime }\left( \left( \frac{1-t}{2}\right) ^{s}b+\left( \frac{%
1+t}{2}\right) ^{s}a\right) \right\vert dt  \notag \\
&&+\frac{1}{4}\int_{0}^{1}\left( ta+\left( 1-t\right) b\right) \left\vert
f^{\prime }\left( \left( \frac{1-t}{2}\right) ^{s}a+\left( \frac{1+t}{2}%
\right) ^{s}b\right) \right\vert dt  \notag
\end{eqnarray}%
Since $\left\vert f^{\prime }\right\vert ^{q}\ $is $s$-convex, by the H\"{o}%
lder inequality, we have%
\begin{eqnarray}
&&\int_{0}^{1}\left\vert f^{\prime }\left( \left( \frac{1-t}{2}\right)
^{s}b+\left( \frac{1+t}{2}\right) ^{s}a\right) \right\vert ^{q}dt
\label{3.2} \\
&\leq &\int_{0}^{1}\left( \left( \frac{1-t}{2}\right) ^{s}\left\vert
f^{\prime }\left( b\right) \right\vert ^{q}+\left( \frac{1+t}{2}\right)
^{s}\left\vert f^{\prime }\left( a\right) \right\vert ^{q}\right) dt  \notag
\end{eqnarray}%
and%
\begin{eqnarray}
&&  \label{3.3} \\
&&\frac{1}{4}\int_{0}^{1}\left( tb+\left( 1-t\right) a\right) \left\vert
f^{\prime }\left( \left( \frac{1-t}{2}\right) ^{s}b+\left( \frac{1+t}{2}%
\right) ^{s}a\right) \right\vert dt  \notag \\
&\leq &\frac{1}{4}\left( \int_{0}^{1}\left( tb+\left( 1-t\right) a\right)
^{p}dt\right) ^{\frac{1}{p}}\left( \int_{0}^{1}\left\vert f^{\prime }\left(
\left( \frac{1-t}{2}\right) ^{s}b+\left( \frac{1+t}{2}\right) ^{s}a\right)
\right\vert ^{q}dt\right) ^{\frac{1}{q}}  \notag \\
&&+\frac{1}{4}\left( \int_{0}^{1}\left( ta+\left( 1-t\right) b\right)
^{p}dt\right) ^{\frac{1}{p}}\left( \int_{0}^{1}\left\vert f^{\prime }\left(
\left( \frac{1-t}{2}\right) ^{s}a+\left( \frac{1+t}{2}\right) ^{s}b\right)
\right\vert ^{q}dt\right) ^{\frac{1}{q}}  \notag \\
&\leq &\frac{1}{4}\left( \int_{0}^{1}\left( tb+\left( 1-t\right) a\right)
^{p}dt\right) ^{\frac{1}{p}}\left[ \int_{0}^{1}\left( \left( \frac{1-t}{2}%
\right) ^{s}\left\vert f^{\prime }\left( b\right) \right\vert ^{q}+\left( 
\frac{1+t}{2}\right) ^{s}\left\vert f^{\prime }\left( a\right) \right\vert
^{q}\right) dt\right] ^{\frac{1}{q}}  \notag \\
&&+\frac{1}{4}\left( \int_{0}^{1}\left( ta+\left( 1-t\right) b\right)
^{p}dt\right) ^{\frac{1}{p}}\left[ \int_{0}^{1}\left( \left( \frac{1-t}{2}%
\right) ^{s}\left\vert f^{\prime }\left( a\right) \right\vert ^{q}+\left( 
\frac{1+t}{2}\right) ^{s}\left\vert f^{\prime }\left( b\right) \right\vert
^{q}\right) dt\right] ^{\frac{1}{q}}.  \notag
\end{eqnarray}%
It can be easily seen that%
\begin{equation}
\int_{0}^{1}\left( tb+\left( 1-t\right) a\right) ^{p}dt=\int_{0}^{1}\left(
ta+\left( 1-t\right) b\right) ^{p}dt=\frac{b^{p+1}-a^{p+1}}{\left(
b-a\right) \left( p+1\right) }=L_{p}^{p}\left( a,b\right)  \label{3.4}
\end{equation}%
If expressions (\ref{3.2})-(\ref{3.4}), we obtain%
\begin{eqnarray*}
&&\left\vert \frac{1}{b-a}\int_{a}^{b}f\left( x\right) dx+\frac{af\left(
b\right) -bf\left( a\right) }{2\left( b-a\right) }-\frac{1}{2}f\left( \frac{%
a+b}{2}\right) \right\vert \\
&=&\frac{1}{4}L_{p}\left( a,b\right) \left[ \frac{1}{2^{s}(s+1)}\left(
\left\vert f^{\prime }\left( b\right) \right\vert ^{q}+\left(
2^{s+1}-1\right) \left\vert f^{\prime }\left( a\right) \right\vert
^{q}\right) \right] ^{\frac{1}{q}} \\
&&+\frac{1}{4}L_{p}\left( a,b\right) \left[ \frac{1}{2^{s}(s+1)}\left(
\left\vert f^{\prime }\left( a\right) \right\vert ^{q}+\left(
2^{s+1}-1\right) \left\vert f^{\prime }\left( b\right) \right\vert
^{q}\right) \right] ^{\frac{1}{q}}.
\end{eqnarray*}%
The proof is completed.
\end{proof}

\begin{proposition}
\label{31}Let $a,b\in J^{\circ },\ 0<a<b$ and $s\in \left( 0,1\right] $, then%
\begin{eqnarray}
&&\left\vert L_{s}^{s}\left( a,b\right) +\frac{\left( s-1\right) G^{2}\left(
a,b\right) L_{s-2}^{s-2}\left( a,b\right) -A^{s}\left( a,b\right) }{2}%
\right\vert  \label{se8} \\
&\leq &\frac{L_{p}\left( a,b\right) }{\left( 2^{2q+s}\left( s+1\right)
\right) ^{\frac{1}{q}}}\left\{ \left( \left( sb^{s-1}\right) ^{q}+\left(
2^{s+1}-1\right) \left( sa^{s-1}\right) ^{q}\right) ^{\frac{1}{q}}\right. 
\notag \\
&&+\left. \left( \left( sa^{s-1}\right) ^{q}+\left( 2^{s+1}-1\right) \left(
sb^{s-1}\right) ^{q}\right) ^{\frac{1}{q}}\right\} .  \notag
\end{eqnarray}
\end{proposition}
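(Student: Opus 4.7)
The argument is a direct specialization of Theorem~\ref{3} to the test function $f(x)=x^{s}$, exactly parallel to the derivation of Proposition~\ref{21} from Theorem~\ref{2}. First I would verify the hypothesis: on $[a,b]\subset(0,\infty)$ the derivative satisfies $|f'(x)|=sx^{s-1}$, which is non-negative and, because $s-1\leq 0$, convex on the positive reals. Any non-negative convex function is $s$-convex in the second sense for $s\in(0,1]$ (since $t\leq t^{s}$ on $[0,1]$), so the hypotheses of Theorem~\ref{3} are met.

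Next I would compute the three ingredients appearing on the left side of (\ref{se5}) when $f(x)=x^{s}$. The average is immediate: $\frac{1}{b-a}\int_a^b x^s\,dx = \frac{b^{s+1}-a^{s+1}}{(b-a)(s+1)} = L_s^s(a,b)$. The midpoint value is $f\!\left(\tfrac{a+b}{2}\right) = A^s(a,b)$. The only non-obvious piece is the boundary term $\frac{bf(a)-af(b)}{b-a}=\frac{ba^s-ab^s}{b-a}$, which I would rewrite as $-(s-1)G^2(a,b)L_{s-2}^{s-2}(a,b)$ by invoking the identity $(s-1)L_{s-2}^{s-2}(a,b)=\frac{b^{s-1}-a^{s-1}}{b-a}$ together with $G^2(a,b)=ab$. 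Pushing this minus sign through the $-\tfrac{1}{2}$ prefactor in (\ref{se5}) produces precisely the expression $\frac{(s-1)G^2(a,b)L_{s-2}^{s-2}(a,b)-A^s(a,b)}{2}$ appearing inside the absolute value of (\ref{se8}).

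For the right-hand side, substituting $|f'(a)|=sa^{s-1}$ and $|f'(b)|=sb^{s-1}$ into the bracketed sum of (\ref{se5}) yields exactly the two $q$-th root terms in (\ref{se8}). The final step is to absorb the factor $\tfrac{1}{4}$ into the constant, using $\frac{1}{4(2^s(s+1))^{1/q}}=\frac{1}{(4^q\cdot 2^s(s+1))^{1/q}}=\frac{1}{(2^{2q+s}(s+1))^{1/q}}$. No analytic obstacle is anticipated; the only real pitfall is the sign bookkeeping in the $L_{s-2}^{s-2}$ identification, which must be carried out correctly or the absolute-value expression will come out with the wrong combination of terms.
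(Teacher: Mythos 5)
Your proposal is correct and follows exactly the paper's own (one-line) proof: apply (\ref{se5}) to $f(x)=x^{s}$ with $\left\vert f^{\prime }(x)\right\vert =sx^{s-1}$. You simply supply the details the paper omits --- the identification of the boundary term with $-(s-1)G^{2}(a,b)L_{s-2}^{s-2}(a,b)$ and the absorption of the factor $\tfrac{1}{4}$ into $\left( 2^{2q+s}(s+1)\right) ^{1/q}$ --- and these check out.
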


\begin{proof}
The proof follows from (\ref{se5}) applied\ to the $s$-convex function $%
f\left( x\right) =x^{s}$ and $\left\vert f^{\prime }\left( x\right)
\right\vert =sx^{s-1}.$
\end{proof}

\begin{proposition}
\label{32}Let $a,b\in J^{\circ },\ 0<a<b$ and $s\in \left( 0,1\right) $, then
\end{proposition}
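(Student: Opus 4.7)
The plan is to apply Theorem \ref{3} (inequality (\ref{se5})) to the $s$-convex function $f(x) = x^{1-s}/(1-s)$, for which $|f'(x)| = 1/x^s$. This is the same test function used in Proposition \ref{22}, but now routed through the Hölder-type bound of Theorem \ref{3} rather than the more elementary bound of Theorem \ref{2}, exactly paralleling the way Proposition \ref{31} extends Proposition \ref{21}. So the proof will be a direct specialization of (\ref{se5}) with no new estimation steps.

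Concretely I would proceed in three short steps. First, I would note (invoking the same fact that underlies Proposition \ref{22}) that $x^{-s}$ is $s$-convex in the second sense on $J^{\circ}$ for $s \in (0,1)$, so that the hypothesis of Theorem \ref{3} is satisfied. Second, I would compute the left-hand side of (\ref{se5}) under this substitution; but this computation is identical to the one already carried out for Proposition \ref{22}, giving
\[
\left| \frac{L_{s}^{s}(a,b)}{1-s} - \frac{sG^{2}(a,b)L_{-1-s}^{-1-s}(a,b) + A^{1-s}(a,b)}{2(1-s)} \right|,
\]
so nothing new has to be rederived on the left. Third, I would plug $|f'(a)| = a^{-s}$ and $|f'(b)| = b^{-s}$ into the bracket on the right-hand side of (\ref{se5}), collecting powers to obtain an expression of the form
\[
\frac{L_{p}(a,b)}{4\bigl(2^{s}(s+1)\bigr)^{1/q}}\Bigl\{ \bigl(b^{-sq} + (2^{s+1}-1)a^{-sq}\bigr)^{1/q} + \bigl(a^{-sq} + (2^{s+1}-1)b^{-sq}\bigr)^{1/q} \Bigr\}.
\]

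There is essentially no obstacle: both sides are pure substitutions, and the only conceptual point is the $s$-convexity of $x^{-s}$, which is precisely the hypothesis already used in Proposition \ref{22}. If anything is delicate, it is only the bookkeeping of exponents (the presence of both $s$ and $q$ inside the same expressions means one has to be careful to write $a^{-sq}$ rather than $a^{-s}$ when taking the $q$-th power inside the Hölder bracket), but this is entirely routine.
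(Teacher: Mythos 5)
Your proposal matches the paper's proof exactly: the paper likewise obtains (\ref{se9}) by specializing (\ref{se5}) to $f(x)=\frac{x^{1-s}}{1-s}$ with $\left\vert f^{\prime }(x)\right\vert =1/x^{s}$, and your final bound $\frac{L_{p}(a,b)}{4\left( 2^{s}(s+1)\right) ^{1/q}}\{\cdots \}$ is identical to the paper's $\frac{L_{p}(a,b)}{\left( 2^{2q+s}(s+1)\right) ^{1/q}}\{\cdots \}$ since $\left( 2^{2q+s}(s+1)\right) ^{1/q}=4\left( 2^{s}(s+1)\right) ^{1/q}$. No discrepancies.
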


\begin{eqnarray}
&&\left\vert \frac{L_{s}^{s}\left( a,b\right) }{1-s}-\frac{sG^{2}\left(
a,b\right) L_{-1-s}^{-1-s}\left( a,b\right) +A^{1-s}\left( a,b\right) }{%
2\left( 1-s\right) }\ \right\vert  \label{se9} \\
&\leq &\frac{L_{p}\left( a,b\right) }{\left( 2^{2q+s}\left( s+1\right)
\right) ^{\frac{1}{q}}}\left\{ \left( b^{-sq}+\left( 2^{s+1}-1\right)
a^{-sq}\right) ^{\frac{1}{q}}\right.  \notag \\
&&+\left. \left( a^{-sq}+\left( 2^{s+1}-1\right) b^{-sq}\right) ^{\frac{1}{q}%
}\right\} .  \notag
\end{eqnarray}

\begin{proof}
The proof follows from (\ref{se5}) applied\ to the $s$-convex function $%
f\left( x\right) =\frac{x^{1-s}}{1-s}$ and $\left\vert f^{\prime
}(x)\right\vert $ $=1/x^{s}$.
\end{proof}

\begin{remark}
In (\ref{se5}), $\allowbreak $(\ref{se8}), if we take $s\rightarrow 1$, then
(\ref{se5}), $\allowbreak $(\ref{se8}) reduces to (\ref{s2}), \cite[%
Proposition 5]{ms}, respectively.
\end{remark}

\begin{theorem}
\label{4} Let $f:J\rightarrow 
\mathbb{R}
$ be a differentiable function on $J^{\circ }.$ If $\left\vert f^{\prime
}\right\vert $\ is $s$-convex on $\left[ a,b\right] $ for some fixed $s\in
\left( 0,1\right] $ and $q>1$, then%
\begin{eqnarray}
&&  \label{se6} \\
&&\left\vert \frac{1}{b-a}\int_{a}^{b}f\left( x\right) dx-\frac{1}{2}\left( 
\frac{bf\left( a\right) -af\left( b\right) }{b-a}+f\left( \frac{a+b}{2}%
\right) \right) \right\vert  \notag \\
&\leq &\frac{A^{1-\frac{1}{q}}\left( a,b\right) }{\left( 2^{2q+s}\left(
s+1\right) \left( s+2\right) \right) ^{\frac{1}{q}}}\times  \notag \\
&&\left\{ \left[ \left( as+a+b\right) \left\vert f^{\prime }(b)\right\vert
^{q}+\left( b\left( s2^{s+1}+1\right) +a\left( 2^{s+2}-s-3\right) \right)
\left\vert f^{\prime }(a)\right\vert ^{q}\allowbreak \right] ^{\frac{1}{q}%
}\right.  \notag \\
&&+\left. \left[ \left( bs+b+a\right) \left\vert f^{\prime }(a)\right\vert
^{q}+\left( a\left( s2^{s+1}+1\right) +b\left( 2^{s+2}-s-3\right) \right)
\left\vert f^{\prime }(b)\right\vert ^{q}\right] ^{\frac{1}{q}}\right\} 
\notag
\end{eqnarray}
\end{theorem}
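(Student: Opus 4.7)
The plan is to follow the same template as Theorem \ref{2} but to insert a power mean step before invoking $s$-convexity, exactly as in the passage from Theorem \ref{3} (convex case with H\"older) to the power mean versions \eqref{s3}. Starting from Lemma \ref{l1}, taking absolute values inside both integrals, and using the triangle inequality, I reduce the task to bounding
\[
I_{1}=\int_{0}^{1}\bigl(tb+(1-t)a\bigr)\left\vert f'\!\left(\tfrac{1-t}{2}b+\tfrac{1+t}{2}a\right)\right\vert dt
\]
and its twin $I_{2}$ obtained by swapping $a\leftrightarrow b$.

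Next I apply the power mean inequality (i.e.\ H\"older with $\frac{1}{q'}+\frac{1}{q}=1$ where $q'=\frac{q}{q-1}$, after factoring the nonnegative weight $tb+(1-t)a$ as $(tb+(1-t)a)^{1-1/q}(tb+(1-t)a)^{1/q}$). This yields
\[
I_{1}\le\Bigl(\int_{0}^{1}\bigl(tb+(1-t)a\bigr)\,dt\Bigr)^{1-\frac{1}{q}}\Bigl(\int_{0}^{1}\bigl(tb+(1-t)a\bigr)\,\bigl|f'(\cdots)\bigr|^{q}dt\Bigr)^{\frac{1}{q}},
\]
and the first factor evaluates trivially to $A(a,b)^{1-1/q}$, which is exactly the prefactor that appears in \eqref{se6}. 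Now I invoke the $s$-convexity of $|f'|^{q}$ to replace $|f'((1-t)/2\cdot b+(1+t)/2\cdot a)|^{q}$ by $((1-t)/2)^{s}|f'(b)|^{q}+((1+t)/2)^{s}|f'(a)|^{q}$, and similarly for $I_{2}$.

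The remaining step is purely computational: I need the four integrals
\[
\int_{0}^{1}\!(tb+(1-t)a)\!\left(\tfrac{1-t}{2}\right)^{\!s}\!dt,\qquad\int_{0}^{1}\!(tb+(1-t)a)\!\left(\tfrac{1+t}{2}\right)^{\!s}\!dt,
\]
together with the two obtained by swapping $a\leftrightarrow b$. These are exactly the integrals that were already evaluated in the course of the proof of Theorem \ref{2}; they equal $\frac{as+a+b}{2^{s}(s+1)(s+2)}$ and $\frac{b(s2^{s+1}+1)+a(2^{s+2}-s-3)}{2^{s}(s+1)(s+2)}$ respectively (and the symmetric pair for $I_{2}$). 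Substituting these values, pulling the common factor $\bigl(2^{s}(s+1)(s+2)\bigr)^{-1/q}$ outside, and combining it with the $\tfrac14=\tfrac{1}{(4^{q})^{1/q}}$ produced by Lemma \ref{l1} gives the denominator $\bigl(2^{2q+s}(s+1)(s+2)\bigr)^{1/q}$ appearing in \eqref{se6}, completing the bound.

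The only potentially delicate point is verifying that the $s$-convexity step applies with the correct arguments, since the convex combination in Lemma \ref{l1} is $\frac{1-t}{2}b+\frac{1+t}{2}a$ (coefficients summing to one), so $s$-convexity of $|f'|^{q}$ directly gives the required $((1-t)/2)^{s}$ and $((1+t)/2)^{s}$ weights. Beyond that, everything reduces to reusing the integral evaluations from the proof of Theorem \ref{2}; no new difficulty arises.
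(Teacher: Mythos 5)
Your proposal follows the paper's own argument essentially verbatim: Lemma \ref{l1}, the power mean (weighted H\"older) step with weight $tb+(1-t)a$ producing the factor $A^{1-\frac{1}{q}}(a,b)$, then $s$-convexity of $\left\vert f^{\prime }\right\vert ^{q}$ and the same two moment integrals already evaluated in the proof of Theorem \ref{2}, with $\frac{1}{4}=\left( 2^{2q}\right) ^{-\frac{1}{q}}$ absorbed into the denominator. This matches the paper's proof in both structure and computation, so no further comment is needed.
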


\begin{proof}
From\ Lemma\ \ref{l1}\ \ and$\ $using\ the\ well-known power mean\
inequality\ and$\ $since$\ \left\vert f^{\prime }\right\vert ^{q}\ $is$\ s$%
-convex\ on$\ \left[ a,b\right] $, we\ can write%
\begin{eqnarray*}
&&\left\vert \frac{1}{b-a}\int_{a}^{b}f\left( x\right) dx+\frac{af\left(
b\right) -bf\left( a\right) }{2\left( b-a\right) }-\frac{1}{2}f\left( \frac{%
a+b}{2}\right) \right\vert \\
&\leq &\frac{1}{4}\left( \int_{0}^{1}\left( tb+\left( 1-t\right) a\right)
dt\right) ^{1-\frac{1}{q}}\left[ \int_{0}^{1}\left( tb+\left( 1-t\right)
a\right) \left\vert f^{\prime }\left( \frac{1-t}{2}b+\frac{1+t}{2}a\right)
\right\vert ^{q}dt\right] ^{\frac{1}{q}} \\
&&+\frac{1}{4}\left( \int_{0}^{1}\left( ta+\left( 1-t\right) b\right)
dt\right) ^{1-\frac{1}{q}}\left[ \int_{0}^{1}\left( ta+\left( 1-t\right)
b\right) \left\vert f^{\prime }\left( \frac{1-t}{2}a+\frac{1+t}{2}b\right)
\right\vert ^{q}dt\right] ^{\frac{1}{q}} \\
&\leq &\frac{1}{4}\left( \int_{0}^{1}\left( tb+\left( 1-t\right) a\right)
dt\right) ^{1-\frac{1}{q}}\left[ \int_{0}^{1}\left( tb+\left( 1-t\right)
a\right) \left( \left( \frac{1-t}{2}\right) ^{s}\left\vert f^{\prime
}(b)\right\vert ^{q}+\left( \frac{1+t}{2}\right) ^{s}\left\vert f^{\prime
}(a)\right\vert ^{q}\right) dt\right] ^{\frac{1}{q}} \\
&&+\frac{1}{4}\left( \int_{0}^{1}\left( ta+\left( 1-t\right) b\right)
dt\right) ^{1-\frac{1}{q}}\left[ \int_{0}^{1}\left( ta+\left( 1-t\right)
b\right) \left( \left( \frac{1-t}{2}\right) ^{s}\left\vert f^{\prime
}(a)\right\vert ^{q}+\left( \frac{1+t}{2}\right) ^{s}\left\vert f^{\prime
}(b)\right\vert ^{q}\right) dt\right] ^{\frac{1}{q}} \\
&\leq &\frac{1}{4}\left( \frac{a+b}{2}\right) ^{1-\frac{1}{q}}\left[
\left\vert f^{\prime }(b)\right\vert ^{q}\int_{0}^{1}\left( \frac{1-t}{2}%
\right) ^{s}\left( tb+\left( 1-t\right) a\right) dt+\left\vert f^{\prime
}(a)\right\vert ^{q}\int_{0}^{1}\left( \frac{1+t}{2}\right) ^{s}\left(
tb+\left( 1-t\right) a\right) dt\right] ^{\frac{1}{q}} \\
&&+\frac{1}{4}\left( \frac{a+b}{2}\right) ^{1-\frac{1}{q}}\left[ \left\vert
f^{\prime }(a)\right\vert ^{q}\int_{0}^{1}\left( \frac{1-t}{2}\right)
^{s}\left( ta+\left( 1-t\right) b\right) dt+\left\vert f^{\prime
}(b)\right\vert ^{q}\int_{0}^{1}\left( \frac{1+t}{2}\right) ^{s}\left(
ta+\left( 1-t\right) b\right) dt\right] ^{\frac{1}{q}} \\
&\leq &\frac{1}{4}A^{1-\frac{1}{q}}(a,b)\left[ \frac{\left\vert f^{\prime
}(b)\right\vert ^{q}}{2^{s}}\allowbreak \frac{as+a+b}{\left( s+1\right)
\left( s+2\right) }+\frac{\left\vert f^{\prime }(a)\right\vert ^{q}}{2^{s}}%
\frac{b\left( s2^{s+1}+1\right) +a\left( 2^{s+2}-s-3\right) }{\left(
s+1\right) \left( s+2\right) }\right] ^{\frac{1}{q}} \\
&&+\frac{1}{4}A^{1-\frac{1}{q}}(a,b)\left[ \frac{\left\vert f^{\prime
}(a)\right\vert ^{q}}{2^{s}}\frac{bs+b+a}{\left( s+1\right) \left(
s+2\right) }+\frac{\left\vert f^{\prime }(b)\right\vert ^{q}}{2^{s}}\frac{%
a\left( s2^{s+1}+1\right) +b\left( 2^{s+2}-s-3\right) }{\left( s+1\right)
\left( s+2\right) }\right] ^{\frac{1}{q}}.
\end{eqnarray*}%
The proof is completed.
\end{proof}

$\allowbreak $

\begin{proposition}
\label{41} Let $a,b\in J^{\circ }$,$\ 0<a<b$ and $s\in \left( 0,1\right] $,
then\ 
\begin{eqnarray}
&&  \label{se7} \\
&&\left\vert L_{s}^{s}\left( a,b\right) +\frac{\left( s-1\right) G^{2}\left(
a,b\right) L_{s-2}^{s-2}\left( a,b\right) -A^{s}\left( a,b\right) }{2}%
\right\vert  \notag \\
&\leq &\frac{A^{1-\frac{1}{q}}(a,b)}{\left( 2^{2q+s}\left( s+1\right) \left(
s+2\right) \right) ^{\frac{1}{q}}}\times  \notag \\
&&\left\{ \left[ \left( as+a+b\right) \allowbreak \left( sb^{s-1}\right)
^{q}+\left( b\left( s2^{s+1}+1\right) +a\left( 2^{s+2}-s-3\right) \right)
\left( sa^{s-1}\right) ^{q}\right] ^{\frac{1}{q}}\right.  \notag \\
&&+\left. \left[ \left( bs+b+a\right) \left( sa^{s-1}\right) ^{q}+\left(
a\left( s2^{s+1}+1\right) +b\left( 2^{s+2}-s-3\right) \right) \left(
sb^{s-1}\right) ^{q}\right] ^{\frac{1}{q}}\right\} .  \notag
\end{eqnarray}
\end{proposition}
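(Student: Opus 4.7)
My plan is to specialize Theorem~\ref{4} to the $s$-convex function $f(x)=x^{s}$ on $[a,b]\subset(0,\infty)$, following the same recipe that produced Propositions~\ref{21} and~\ref{31} from Theorems~\ref{2} and~\ref{3}. Since $f'(x)=sx^{s-1}$ is positive on $(0,\infty)$, we have $|f'(x)|=sx^{s-1}$, and substituting $|f'(a)|=sa^{s-1}$, $|f'(b)|=sb^{s-1}$ into the right-hand side of (\ref{se6}) reproduces the right-hand side of (\ref{se7}) verbatim. So no estimation work is needed on the right; everything is a direct substitution.

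The left-hand side of (\ref{se6}) coincides with the left-hand sides of (\ref{se2}) and (\ref{se5}), so the same reduction used in the proofs of Propositions~\ref{21} and~\ref{31} applies. Concretely, by definition of the generalized logarithmic mean, $\frac{1}{b-a}\int_{a}^{b}x^{s}\,dx=L_{s}^{s}(a,b)$; by definition of the arithmetic mean, $f\!\left(\tfrac{a+b}{2}\right)=A^{s}(a,b)$; and the mixed term simplifies via the identity
\[
(s-1)\,G^{2}(a,b)\,L_{s-2}^{s-2}(a,b)=\frac{ab\bigl(b^{s-1}-a^{s-1}\bigr)}{b-a}=\frac{ab^{s}-a^{s}b}{b-a},
\]
which uses $L_{s-2}^{s-2}(a,b)=\frac{b^{s-1}-a^{s-1}}{(s-1)(b-a)}$ and $G^{2}(a,b)=ab$. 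Hence $\frac{bf(a)-af(b)}{b-a}=-(s-1)G^{2}(a,b)\,L_{s-2}^{s-2}(a,b)$, and combining the three identities rewrites the left side of (\ref{se6}) as $\bigl|L_{s}^{s}(a,b)+\tfrac12\bigl[(s-1)G^{2}(a,b)\,L_{s-2}^{s-2}(a,b)-A^{s}(a,b)\bigr]\bigr|$, which is exactly the left side of (\ref{se7}).

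There is no real analytical obstacle here; the argument is pure bookkeeping once the identity above is in hand. The only mild subtlety is notational: the paper's convention for $L_{p}^{p}$ requires $p\neq -1$, so $L_{s-2}^{s-2}$ is not literally defined at $s=1$. However, the product $(s-1)L_{s-2}^{s-2}(a,b)=\frac{b^{s-1}-a^{s-1}}{b-a}$ extends continuously to $0$ at $s=1$, which matches $ba^{s}-ab^{s}$ vanishing there, so Proposition~\ref{41} remains correct on all of $(0,1]$ under this limiting convention.
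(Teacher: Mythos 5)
Your proposal is correct and follows exactly the paper's own (very terse) proof: apply (\ref{se6}) to $f(x)=x^{s}$ with $\left\vert f^{\prime }(x)\right\vert =sx^{s-1}$. The extra bookkeeping you supply --- the identity $(s-1)G^{2}(a,b)L_{s-2}^{s-2}(a,b)=\frac{ab^{s}-a^{s}b}{b-a}$ and the remark about the convention for $L_{s-2}^{s-2}$ at $s=1$ --- is accurate and in fact more careful than what the paper records.
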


\begin{proof}
The proof follows from (\ref{se6}) applied\ to the $s$-convex function $%
f\left( x\right) =x^{s}$ and $\left\vert f^{\prime }\left( x\right)
\right\vert =sx^{s-1}$.
\end{proof}

\begin{proposition}
\label{42} Let $a,b\in J^{\circ }$,$\ 0<a<b$ and $s\in \left( 0,1\right] $,
then%
\begin{eqnarray}
&&  \label{se10} \\
&&\left\vert \frac{L_{s}^{s}\left( a,b\right) }{1-s}-\frac{sG^{2}\left(
a,b\right) L_{-1-s}^{-1-s}\left( a,b\right) +A^{1-s}\left( a,b\right) }{%
2\left( 1-s\right) }\ \right\vert  \notag \\
&\leq &\frac{A^{1-\frac{1}{q}}(a,b)}{\left( 2^{2q+s}\left( s+1\right) \left(
s+2\right) \right) ^{\frac{1}{q}}}\times  \notag \\
&&\left\{ \left[ \allowbreak \left( as+a+b\right) b^{-sq}+\left( b\left(
s2^{s+1}+1\right) +a\left( 2^{s+2}-s-3\right) \right) a^{-sq}\right] ^{\frac{%
1}{q}}\right.  \notag \\
&&+\left. \left[ \left( bs+b+a\right) a^{-sq}+\left( a\left(
s2^{s+1}+1\right) +b\left( 2^{s+2}-s-3\right) \right) b^{-sq}\right] ^{\frac{%
1}{q}}\right\} .  \notag
\end{eqnarray}
\end{proposition}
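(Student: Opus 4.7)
The plan is to specialize Theorem~\ref{4} --- specifically inequality (\ref{se6}) --- to the $s$-convex function $f(x)=\frac{x^{1-s}}{1-s}$, for which $f'(x)=x^{-s}$ and hence $|f'(x)|^q = x^{-sq}$, with $|f'(a)|^q=a^{-sq}$ and $|f'(b)|^q=b^{-sq}$. The preliminary fact that $|f'|$ is $s$-convex on $[a,b]\subset(0,\infty)$ for $s\in(0,1]$ is the very same ingredient invoked in Propositions~\ref{22} and~\ref{32}, so I would simply cite it rather than re-derive it.

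Next, I would observe that the left-hand side of (\ref{se6}) with this choice of $f$ is identical to the left-hand side of (\ref{se4}) already computed for Proposition~\ref{22}. Indeed, the integral term reduces to $\frac{L_s^s(a,b)}{1-s}$, the boundary term $\frac{bf(a)-af(b)}{b-a}$ produces the $G^2(a,b)\,L_{-1-s}^{-1-s}(a,b)$ contribution with its $s/(2(1-s))$ prefactor, and the midpoint value is $f\!\left(\tfrac{a+b}{2}\right) = \frac{A^{1-s}(a,b)}{1-s}$. No fresh integration is required here; I would quote these identifications directly from the earlier proposition.

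What remains is the purely algebraic substitution of $|f'(a)|^q=a^{-sq}$ and $|f'(b)|^q=b^{-sq}$ into the two bracketed expressions on the right-hand side of (\ref{se6}). The resulting bound is exactly (\ref{se10}); no further simplification is available or attempted, as the bound is left in its raw ``literal substitution'' form to parallel Proposition~\ref{41}.

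The only genuine source of potential error --- and it is bookkeeping rather than a mathematical obstacle --- is respecting the asymmetric exchange of $a$ and $b$ between the two brackets of (\ref{se6}): inside the first bracket the coefficient $(as+a+b)$ multiplies $|f'(b)|^q$ and hence $b^{-sq}$, whereas the coefficient $\bigl(b(s2^{s+1}+1)+a(2^{s+2}-s-3)\bigr)$ multiplies $|f'(a)|^q$ and hence $a^{-sq}$, with the roles swapped in the second bracket. Matching each slot with the correct power of $a$ or $b$ is the entirety of the computation, after which (\ref{se10}) is immediate.
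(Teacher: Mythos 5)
Your proposal is correct and matches the paper's own proof, which likewise just applies (\ref{se6}) to $f(x)=\frac{x^{1-s}}{1-s}$ with $|f'(x)|=1/x^{s}$ and substitutes $|f'(a)|^{q}=a^{-sq}$, $|f'(b)|^{q}=b^{-sq}$ into the two brackets. The only (shared) caveat is that the function is undefined at $s=1$, so the hypothesis should really read $s\in(0,1)$ as in Propositions \ref{22} and \ref{32}.
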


\begin{proof}
The proof follows from (\ref{se6}) applied\ to the $s$-convex function $%
f\left( x\right) =\frac{x^{1-s}}{1-s}$ and $\left\vert f^{\prime
}(x)\right\vert $ $=1/x^{s}$.
\end{proof}

\begin{remark}
In (\ref{se6}), $\allowbreak $(\ref{se7}), if we take $s\rightarrow 1$, then
(\ref{se6}), $\allowbreak $(\ref{se7}) reduces to (\ref{s3}), \cite[%
Proposition 8]{ms} respectively.
\end{remark}

\end{document}